\documentclass[11pt]{article}
\usepackage{amssymb, amsmath, amsthm, amsfonts,enumerate}
\usepackage{amsmath,setspace,scalefnt}
\usepackage{graphicx,tikz,caption,subcaption}
\usetikzlibrary{patterns}
\usetikzlibrary{shapes}
\usepackage{fullpage}
\usepackage{hyperref}
\usepackage{enumitem,verbatim}
\usetikzlibrary{decorations.pathmorphing}
\usepackage{subcaption}

\tikzset{snake it/.style={decorate, decoration=snake}}
\usetikzlibrary{calc}

\definecolor{gray}{rgb}{0.25, 0.25, 0.25}

\newtheorem{theorem}{Theorem}[section]
\newtheorem{lemma}[theorem]{Lemma}
\newtheorem{cor}[theorem]{Corollary}

\theoremstyle{definition}

\theoremstyle{plain}

\newtheorem{prop}[theorem]{Proposition}
\theoremstyle{definition}

\theoremstyle{definition}

\theoremstyle{definition}

\theoremstyle{definition}
\newtheorem{defn}[theorem]{Definition}
\theoremstyle{definition}

\theoremstyle{definition}

\newcommand{\ep}{\varepsilon}
\newcommand{\eps}{\varepsilon}

\newcommand{\de}{\delta}

\newcommand{\cA}{\mathcal{A}}
\newcommand{\cP}{\mathcal{P}}

\newcommand{\N}{\mathbb{N}}

\newcommand{\tk}{\mathsf{TK}}

\newcounter{propcounter}

\newcommand{\makenote}[2]
{

\smallskip

\noindent

\fbox{
\begin{minipage}{0.95\textwidth}

\def\temp{#1}
\ifx\temp\empty
\def\forlabel{$\bullet$}
\else
\def\forlabel{{\bfseries #1:}}
\fi

\begin{itemize}[label = \forlabel]
  #2/
\end{itemize}
\end{minipage}
}

\smallskip
}

\title{Balanced subdivisions of a large clique in graphs with high average degree}

 \author{
Yan Wang
\footnote{School of Mathematical Sciences, CMA-Shanghai, Shanghai Jiao Tong University, Shanghai 200240, China
(yan.w@sjtu.edu.cn)} 
\thanks{Partially supported by National Key R\&D Program of China under Grant No. 2022YFA1006400, National Natural Science Foundation of China under grant No.12201400 and Explore X project of Shanghai Jiao Tong University}
}

\date{February 8, 2023}

\begin{document}
\maketitle

\begin{abstract} 

In 1984, Thomassen conjectured that for every constant $k \in \mathbb{N}$, there exists $d$ such that every graph with average degree at least $d$ contains a balanced subdivision of a complete graph on $k$ vertices, i.e. a subdivision in which each edge is subdivided the same number of times. Recently, Liu and Montgomery confirmed Thomassen's conjecture. 

We show that for every constant $0<c<1/2$, every graph with average degree at least $d$ contains a balanced subdivision of a complete graph of size at least $\Omega(d^{c})$.
Note that this bound is almost optimal.
Moreover, we show that every sparse expander with minimum degree at least $d$ contains a balanced subdivision of a complete graph of size at least $\Omega(d)$.
\end{abstract}

\section{Introduction}\label{sec:intro}

Let $G$ be a graph. 
A \textit{subdivision} of $G$, denoted by $\mathsf{T}G$, is a graph obtained from $G$ by replacing each of its edges into internally vertex disjoint paths. 
We call the vertices of $\mathsf{T}G$ corresponding to the vertices of $G$ its \textit{core} vertices.  
Subdivisions play an important role in topological graph theory.
In 1930s, Kuratowski \cite{Kur30} showed that a graph is not planar if and only if it contains a subdivision of a complete graph on five vertices or a subdivision of a complete bipartite graph with three vertices in each partition.

For integer $t > 0$, let $d(t)$ be the minimum number $d$ such that every graph with average degree at least $d$ contains a subdivision of a complete graph $K_t$.
In 1967, Mader \cite{Mad67} showed such $d(t)$ must exist. 
Mader \cite{Mad67}, and independently Erd\H{o}s and Hajnal \cite{EH69} conjectured that $d(t) = O(t^2)$.
Later Mader \cite{Mad72} improved the upper bound of $d(t)$ to $O(2^t)$.
In 1990s, Koml\'os and Szemer\'edi \cite{K-Sz-1, K-Sz-2}, and independently, Bollob\'as and Thomason \cite{BT98} confirmed this conjecture. 
Indeed, $d(t) = \Theta(t^2)$.
As Jung \cite{Jung70} first observed, the lower bound of $d(t)$ can be achieved by disjoint union of complete regular bipartite graphs.

To guarantee a subdivision of a complete graph of size linear to the average degree, one must impose some additional conditions to eliminate the extremal examples.
Minimum girth condition is one of them as complete bipartite graphs contain many short cycles. 
In fact, Mader \cite{Mad99} conjectured that every $C_4$-free graph of average degree $d$ contains a subdivision of a complete graph of size linear to $d$.
K\"uhn and Osthus \cite{KO02, KO06} showed that every graph with sufficiently large girth contains a subdivision of a complete graph of size larger than its minimum degree. 
They \cite{KO04} also showed that every $C_4$-free graph of average degree $d$ contains a $\tk_{d/\log^{12} d}$.
In \cite{BLS15}, Balogh, Liu and Sharifzadeh proved Mader's conjecture when the graph is $C_6$-free.
Recently, Liu and Montgomery \cite{liu2017proof} completely resolved this conjecture. 
Note that those proofs utilize the technique developed by  Koml\'os and Szemer\'edi \cite{K-Sz-1, K-Sz-2}.

For $\ell \in \mathbb{N}$, an \textit{$\ell$-balanced subdivision} of $G$, denoted by $\mathsf{T}G^{(\ell)}$, is a graph obtained from $G$ by replacing each of its edges into internally vertex disjoint paths of length exactly $\ell$. 
A \textit{balanced subdivision} is an \textit{$\ell$-balanced subdivision} for some $\ell \in \mathbb{N}$.
Thomassen \cite{Tho84, Tho85, thomassen} conjectured that for every constant $k \in \mathbb{N}$, there exists $d$ such that every graph with average degree at least $d$ contains a $\tk^{(\ell)}_k$ for some $\ell \in \mathbb{N}$.
More recently, Liu and Montgomery \cite{liu2020proof} confirmed Thomassen's conjecture.

In this paper, we study the following question: 
Given a graph with average degree $d$, what is the largest size of a balanced subdivision of a complete graph that it contains as subgraph?
We will show the following. 

\begin{theorem} \label{thm:main}
For every constant $0<c<1/2$, 
every graph with average degree at least $d$ contains a 
$\tk^{(\ell)}_{\Omega(d^{c})}$ for some $\ell \in \mathbb{N}$.
%$\tk^{(\ell)}_{\Omega(d^{1/2}/\log^{10} n)}$ for some $\ell \in \mathbb{N}$.
\end{theorem}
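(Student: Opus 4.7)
The plan is to apply the sublinear expander framework of Komlós and Szemerédi, as adapted by Liu and Montgomery in their proof of Thomassen's conjecture, while tracking parameters to extract a polynomial-in-$d$ bound on the number of branch vertices. The overall strategy is to pass to a sublinear expander with minimum degree $\Omega(d)$, select $t = \Theta(d^c)$ branch vertices equipped with private expansion balls, and greedily connect each pair of branches by an internally-disjoint path of some common pre-specified length $\ell$.

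First I would invoke the Komlós-Szemerédi expander theorem to replace $G$ by a subgraph $H$ that is a sublinear expander with $\delta(H) = \Omega(d)$. Inside $H$ the salient tool is the short-path lemma: for any two vertex sets $X_1, X_2$ of size $\Omega(d)$ and any obstacle set $W$ of size $o(d)$, there exists an $X_1$--$X_2$ path in $H \setminus W$ of length at most $\rho = \log^{O(1)}|V(H)|$. Using $\delta(H) = \Omega(d)$ together with a standard greedy selection, pick branch vertices $v_1, \ldots, v_t$ with $t = \lfloor d^c \rfloor$, each carrying a private expansion ball $B_i$ of size $\Omega(d^{1-c})$, the balls pairwise disjoint. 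The restriction $c < 1/2$ is essential here, since the $B_i$ must collectively fit inside $V(H)$ while each remains much larger than $t$.

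Fix a target length $\ell \geq 2\rho$ and process the $\binom{t}{2}$ pairs in some order. When it is the turn of $(v_i, v_j)$, the set $W$ of previously-used internal vertices has size at most $\binom{t}{2}\ell = O(d^{2c} \cdot \log^{O(1)}|V(H)|) = o(d)$, well within the error budget of the short-path lemma. The main obstacle is that this lemma only supplies paths of length \emph{at most} $\rho$, whereas balancedness demands length \emph{exactly} $\ell$. Following the Liu-Montgomery approach, one constructs each $v_i$--$v_j$ path as a concatenation of two tunable halves: grow a BFS structure from $v_i$ inside $B_i$ to reach a level set at an adjustable depth, do the same from $v_j$ inside $B_j$, and then connect the two level sets through the expander, using the flexibility in depth choice to fix the total length at exactly $\ell$.

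The main technical obstacle is precisely this length-tuning step: every path must hit the target length $\ell$ on the nose, and the BFS level sets used for tuning must retain enough flexibility after all previous paths are deleted. This is where the parameter $c < 1/2$ enters twice: once to guarantee that the $O(t^2 \ell)$ used path-vertices form a negligible fraction of $V(H)$, and once to ensure that each private ball $B_i$ of size $\Omega(d^{1-c})$ can absorb $t-1 = O(d^c)$ path-endpoint choices while still offering level sets at every required depth. Pushing the argument through yields the desired $\tk_t^{(\ell)}$ with $t = \Omega(d^c)$.
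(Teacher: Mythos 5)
Your high-level outline (pass to a Koml\'os--Szemer\'edi expander, fix $t$ core vertices with expansion sets, greedily connect pairs by paths of exact length $\ell$ using adjuster-style length tuning) matches the skeleton of the paper's argument, but there is a genuine gap in the quantitative accounting, and it hides the one idea the proof cannot do without: a dichotomy between the dense regime $d\ge \log^{s}n$ and the sparse regime $d<\log^{s}n$. Your central claim that the obstacle set satisfies $|W|\le \binom{t}{2}\ell=O(d^{2c}\log^{O(1)}|V(H)|)=o(d)$ is simply false in general: $n=|V(H)|$ can be arbitrarily large compared to $d$, and once $d\le \log^{O(1)}n$ the quantity $d^{2c}\log^{O(1)}n$ dwarfs $d$. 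In that regime no expansion set of size $O(d)$ (let alone your private balls of size $\Omega(d^{1-c})$) survives the deletion of $W$, and the short-path lemma's hypothesis $|W|\log^{3}n\le 10x$ fails outright. This is exactly why the paper splits into Lemma \ref{lem:dense} and Lemma \ref{lem:sparse}: in the sparse case the expansion sets must be balls of radius $(\log\log n)^{5}$ of size at least $d^{2}\log^{10}n$, grown around core vertices that are pairwise far apart, and one must maintain them under repeated path deletion via the ``consecutive shortest paths'' bookkeeping (Lemmas \ref{lem-sparse-small-ball}, \ref{lem-sparse-large-ball}, \ref{lem-finalconnect-sparse2}) --- none of which is present in your sketch.

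Even restricted to the dense case your parameters do not close. With $t=\lfloor d^{c}\rfloor$ core vertices the obstacle set has size about $d^{2c}\log^{7}n$, so the connection lemma needs expansion sets of size at least $d^{2c}\log^{10}n$; balls of size $d^{1-c}$ only satisfy $d^{1-c}\ge d^{2c}\log^{10}n$ when $c<1/3$ (and $d$ is a large power of $\log n$), so your argument cannot reach any $c\in[1/3,1/2)$. The paper instead takes the expansion sets to be essentially full neighbourhoods of size $d/\log^{10}n$ and only $t=\sqrt{d}/(2\log^{10}n)$ core vertices; the exponent $c$ then emerges a posteriori from the dense-case threshold $s=\frac{20}{1-2c}$, which forces $\log^{10}n\le d^{(1-2c)/2}$ and hence $\sqrt{d}/\log^{10}n\ge d^{c}$. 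Relatedly, your stated reason that $c<1/2$ is needed ``so the $B_i$ collectively fit inside $V(H)$'' is not where the constraint actually bites; the expansion sets are chosen pair by pair and need not coexist. To repair the proposal you would need to (i) add the dense/sparse case split with the threshold $\log^{s}n$, (ii) in the dense case replace $t=d^{c}$ and balls of size $d^{1-c}$ by $t=\sqrt{d}/\log^{O(1)}n$ and balls of size $d/\log^{O(1)}n$, and (iii) in the sparse case supply the entirely separate ball-growing and regrowth machinery.
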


We observe that an upper bound of this problem is $O(d^{1/2})$, given by disjoint union of complete regular bipartite graphs.
Therefore, Theorem \ref{thm:main} is almost optimal.
% up to a logarithmic factor. 

The proof of Theorem \ref{thm:main} uses the ideas from \cite{K-Sz-2, liu2017proof, liu2020proof}. 
By a result of Koml\'os and Szemer\'edi, we can find a graph that is as dense as the original graph and has some expansion property. 
Then we divide into two cases depending on whether the graph is dense or not.
The dense case is handled in Lemma \ref{lem:dense} and the sparse case is covered in Lemma \ref{lem:sparse}.

The rest of the paper will be organized as follows:
In Section \ref{sec:pre}, we introduce the notion of graph expanders and adjusters, and give some lemmas that will be used later. 
We construct large balanced clique subdivisions in dense graphs and show Lemma \ref{lem:dense} in Section \ref{sec:dense}.
The proof of Lemma \ref{lem:sparse} will be given in Section \ref{sec:sparse} where we further divide into cases according to whether such graph has many vertices of large degree or not.
We conclude in Section \ref{sec:proof}.

\subsection{Notations} \label{sec:notation}

Let $G$ be a graph. 
Let $V(G)$ and $E(G)$ be vertex set and edge set of $G$ respectively.
Let $d(G), \delta(G), \Delta(G)$ be average degree, minimum degree and maximum degree of $G$ respectively.
For $v \in V(G)$, let $d_G(v)$ denote the degree of $v$ in $G$.
We omit the subscript if there is no confusion.

For two vertices $u,v \in V(G)$, a $u,v$-path is a path with end vertices $u$ and $v$. 
We use $\ell(P)$ to denote the length of a path $P$.
The distance between two set of vertices $U,V$ in a graph $G$ is the minimum length of a $u,v$-path in $G$ with $u \in U$ and $v \in V$.
For a $u,v$-path $P$ and an integer $0 \le r \le \ell(P)$, let $P(v,r)$ be the subpath of $P$ of length $r$ with one of its end vertex to be $v$.

Let $X \subseteq V(G)$, we write $G - X$ for the induced subgraph of $G[V(G) \backslash X]$.
Denote $N_G(X)$ the (external) neighborhood of $X$ in $G - X$. 
For integer $i \ge 0$, we define the $i$-th ball around $X$ in $G$ to be the set of vertices that have distance at most $i$ from $X$ in $G$, denoted by  $B^i_G(X)$.
For convenience, $B_G(X) = B^1_G(X) = X \cup N_G(X)$.

We omit the floors and ceilings when they are not crucial.
All logarithms are natural.

\section{Preliminaries} \label{sec:pre}

\subsection{Koml\'os-Szemer\'edi graph expanders}\label{sec:expander}
%%%%%%%%%%%%%%%%%%%%%%%%
%%%%%%%%%%%%%%%%%%%%%%%%
%%%%%%%%%%%%%%%%%%%%%%%%%%%%%
The well connectedness of a graph can be measured by the expansion property. 
One form of the expansion property is as follow: For a graph $G$ and for every not too large set $X\subseteq V(G)$, $|N_G(X)|\geq \eps(|X|) |X|$ holds for some function $\eps$ depending on $|X|$.
A graph that satisfies the expansion property is called an expander graph.
A detailed coverage of expander graphs and their applications in theoretical computer science is presented in \cite{HooryLiWi06}, and their applications in mathematics are given in \cite{Lubotzky12}.

While the linear expansion property (when $\eps$ is a constant function) has been studied extensively, 
Koml\'os and Szemer\'edi~\cite{K-Sz-1,K-Sz-2} introduced sublinear expansion property, which forms the base of our proof. 

\begin{defn}
For each $\ep_1>0$ and $k>0$, a graph $G$ is an \emph{$(\eps_1,k)$-expander} if
$$|N_G(X)|\geq \eps(|X|,\eps_1,k)\cdot |X|$$
for all $X\subseteq V(G)$ with $k/2\leq |X|\leq |V(G)|/2$, where
\begin{eqnarray}\label{epsilon}
\ep(x,\ep_1,k):=\left\{\begin{tabular}{ l l }
$0$ & $\mbox{ if } x<k/5$, \\
$\ep_1/\log^2(15x/k)$ & $\mbox{ if } x\ge k/5$. \\
\end{tabular}
\right.
\end{eqnarray}
Whenever the choices of $\eps_1, k$ are clear, we omit them and write $\eps(x)$ for $\eps(x,\eps_1,k)$.
\end{defn}

%If an $n$-vertex graph $G$ is an $(\eps_1,k)$-expander and $X\subseteq V(G)$ has size at least $k/2$, then $B^i_G(X)$ increases as $i$ increases, until the set contains at least $n/2$ vertices. The rate of expansion, $|N_G(B^i_G(X))|/|B^i_G(X)|\geq \eps(|B^i_G(X)|,\eps_1,k)$ guaranteed by the expansion condition decreases as $i$ increases (see, for example, \cite{K-Sz-2}). 

In the above definition, 
note that $\ep(x,\eps_1,k)$ decreases as $x\geq k/2$ increases,
so the rate of expansion decreases as the size of $X$ grows. 
However, $\ep(x,\eps_1,k)\cdot x$ increases as $x$ increases, 
so the number of vertices that $X$ expands increases as the size of $X$ grows.

Koml\'os and Szemer\'edi~\cite{K-Sz-2} showed that every graph $G$ contains an expander subgraph with average degree and minimum degree linear to the average degree of $G$.

\begin{lemma}[\cite{K-Sz-2}]\label{thm-expander}
There exists some $\eps_1>0$ such that the following holds for every $k>0$. Every graph $G$ has an $(\eps_1,k)$-expander subgraph $H$ with $d(H)\geq d(G)/2$ and $\delta(H)\geq d(H)/2$.
\end{lemma}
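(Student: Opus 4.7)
My plan is to obtain $H$ by an iterative extremal argument combined with a standard min-degree cleanup. First, I would invoke the classical fact that every graph with average degree $d$ contains a subgraph of minimum degree at least $d/2$, obtained by greedily deleting vertices of current degree below half the current average degree (a deletion that cannot decrease the average degree). Applied to $G$, this yields a starting subgraph $H_0$ with $d(H_0) \ge d(G)/2$ and $\delta(H_0) \ge d(H_0)/2$.

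\textbf{Peeling to force expansion.} Then I would iterate the following peeling step. If the current subgraph $H_i$ is already an $(\eps_1, k)$-expander, stop and output it. Otherwise, let $X \subseteq V(H_i)$ witness non-expansion, so that $k/2 \le |X| \le |V(H_i)|/2$ and $|N_{H_i}(X)| < \eps(|X|)\,|X|$. Set $A := X \cup N_{H_i}(X)$ and consider the two induced subgraphs $H_i[A]$ and $H_i[V(H_i)\setminus A]$, both with strictly fewer vertices than $H_i$. Using the bound $\delta(H_i) \ge d(H_i)/2$ to control the sum of degrees of vertices in $X$ (these vertices keep all their edges inside $A$), edge-counting shows that at least one of the two induced subgraphs has average degree at least $(1 - O(\eps(|X|))) \cdot d(H_i)$. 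Pass to that subgraph, apply the min-degree cleanup once more to restore $\delta \ge d/2$, and call the result $H_{i+1}$. Since $|V(H_{i+1})| < |V(H_i)|$, the process terminates in $O(\log(|V(G)|/k))$ iterations — at each peeling the vertex count essentially halves, because $|A| \le (1+\eps(|X|))|X| \le (1+\eps)|V(H_i)|/2$ and the complementary set has size at most $|V(H_i)| - |X|$.

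\textbf{Bookkeeping and the main obstacle.} To finish, I would bound the total multiplicative loss in $d(\cdot)$ over all iterations by the convergent product
\[
\prod_{i \ge 1} \bigl(1 - O\bigl(\eps_1/\log^2(2^i)\bigr)\bigr) \;=\; \prod_{i \ge 1} \bigl(1 - O(\eps_1/i^2)\bigr),
\]
which tends to a positive constant. The particular form $\eps(x) = \eps_1/\log^2(15x/k)$ is designed precisely so that $\sum_i \eps(2^i) < \infty$; choosing $\eps_1$ small enough as an absolute constant makes the product exceed, say, $1/2$, so the output $H$ satisfies $d(H) \ge d(G)/2$. The final cleanup then guarantees $\delta(H) \ge d(H)/2$, and $H$ is an expander by construction. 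The main obstacle is the edge-counting in the peeling step: one must show that the average-degree loss is only a factor $1 - O(\eps(|X|))$ rather than a factor of $2$. This is the place where the hypothesis $\delta(H_i) \ge d(H_i)/2$ (restored after every iteration by the cleanup) is used in an essential way, since without it the boundary edges across $A$ could absorb an arbitrarily large share of $|E(H_i)|$ and derail the iteration.
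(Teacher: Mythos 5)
This lemma is quoted from Koml\'os--Szemer\'edi and the paper gives no proof of it, so your attempt must be measured against the known argument. As written it has a genuine gap, in fact two. First, the bookkeeping does not work: the witness set $X$ of non-expansion only satisfies $k/2\le |X|\le |V(H_i)|/2$, so when your algorithm passes to the complementary part $V(H_i)\setminus A$ the vertex count drops by merely $|A|\le (1+\eps)|X|$, which can be as small as roughly $k$, a negligible fraction of $|V(H_i)|$. Hence the process is not confined to $O(\log(|V(G)|/k))$ rounds (it can run for $\Theta(n/k)$ rounds), and the product $\prod_i\bigl(1-O(\eps_1/\log^2 2^i)\bigr)$ is not the right accounting: in the bad regime every round has $|X|\approx k/2$, so $\eps(|X|)\approx \eps_1/\log^2(15/2)$ is a fixed constant times $\eps_1$, and a product of $\Theta(n/k)$ factors $1-\Theta(\eps_1)$ tends to $0$; since $\eps_1$ must be an absolute constant, you cannot rescue this by shrinking it with $n$. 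The convergence of $\sum_i\eps(2^i)$ is only relevant if each step halves the order, which your own estimate $|V(H_i)\setminus A|\le |V(H_i)|-|X|$ shows is not guaranteed.

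Second, the key per-step claim --- that one of $H_i[A]$, $H_i[V(H_i)\setminus A]$ has average degree at least $(1-O(\eps(|X|)))\,d(H_i)$ --- is not established and is false in general. With your disjoint split all edges between $N_{H_i}(X)$ and $V(H_i)\setminus A$ are discarded from both parts, and these are not controlled by $|N_{H_i}(X)|<\eps(|X|)|X|$, since vertices of $N_{H_i}(X)$ may have huge degree into the complement. Even with the standard overlapping split into $H_i[X\cup N(X)]$ and $H_i-X$ (which does cover every edge), the bound $\delta(H_i)\ge d(H_i)/2$ only yields average degree about $d(H_i)/2$ on the small side, i.e.\ exactly the factor-$2$ loss you say must be avoided at every step. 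The Koml\'os--Szemer\'edi proof circumvents both problems by not tracking average degree iteratively at all: one picks a subgraph $H$ extremal for a potential of the form $e(H)/\bigl(v(H)\lambda(v(H))\bigr)$, where $\lambda$ is a bounded, slowly increasing function built from $\eps(\cdot)$ (this is where the convergence of $\int \eps(x)\,dx/x$, equivalently of $\sum_i\eps(2^i)$, really enters), and shows that a non-expanding set $X$ would let one of $H[X\cup N(X)]$, $H-X$ beat $H$ in this potential --- the drop in vertex order compensates the possible halving of density on the small side, and boundedness of $\lambda$ converts extremality into $d(H)\ge d(G)/2$ with a single final min-degree cleanup. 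Your opening cleanup step and the instinct behind the choice of $\eps(x)$ are sound, but without an extremal/potential-function argument (or some other mechanism tying the density loss to the relative decrease in order) the iteration as proposed does not prove the lemma.
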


Note that, in Theorem~\ref{thm-expander},  the expander subgraph $H$ can be much smaller than the original graph $G$ in size. 
To see this, one can take $G$ to be the disjoint union of many copies of such $H$.

The expansion property allows us to connect vertex sets with a short path even after removing a small set of vertices (see Lemma 3.4 from \cite{liu2017proof}).

\begin{lemma}[\cite{liu2017proof}]\label{new-connect} For each $0<\eps_1,\eps_2<1$, there exists $d_0=d_0(\eps_1,\eps_2)$ such that the following holds for each $n\geq d\geq d_0$ and $x\geq 1$. Let $G$ be an $n$-vertex $(\ep_1,\ep_2d)$-expander with $\delta(G)\geq d-1$.

Let $A,B\subseteq V(G)$ with $|A|,|B|\geq x$, and  let $W\subseteq V(G)\setminus(A\cup B)$ satisfy $|W|\log^3n\leq 10x$. Then, there is a path from $A$ to $B$ in $G-W$ with length at most $\frac{40}{\eps_1}\log^3n$.
\end{lemma}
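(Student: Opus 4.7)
The plan is to run breadth-first search simultaneously from $A$ and from $B$ in $G - W$, and use the sublinear expansion of $G$ to show that both balls grow at a controlled exponential rate until they are forced to meet. Writing $A_i = B^i_{G - W}(A)$ and $B_j = B^j_{G - W}(B)$, fix $r = \frac{20}{\eps_1}\log^3 n$. I aim to prove that $A_r \cap B_r \neq \emptyset$, which immediately produces an $A$-to-$B$ path in $G - W$ of length at most $2r = \frac{40}{\eps_1}\log^3 n$.

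The workhorse is the following growth estimate. Whenever $\eps_2 d / 2 \leq |A_i| \leq n/2$, the expander property together with $|N_{G-W}(A_i)| \geq |N_G(A_i)| - |W|$ gives
\[ |A_{i+1}| \;\geq\; |A_i| + \eps(|A_i|)\,|A_i| - |W|. \]
To absorb the $|W|$ term, I would combine the hypothesis $|W| \leq 10x/\log^3 n \leq 10|A_i|/\log^3 n$ (using $|A_i| \geq |A| \geq x$) with the uniform lower bound $\eps(|A_i|) \geq \eps_1/\log^2(15n/(\eps_2 d)) \geq \eps_1/(C\log^2 n)$, where $C$ is an absolute constant once $d_0 = d_0(\eps_1,\eps_2)$ is taken large enough. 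This yields $|W| \leq \tfrac12\eps(|A_i|)|A_i|$, hence
\[ |A_{i+1}| \;\geq\; \bigl(1 + \tfrac12 \eps(|A_i|)\bigr)\,|A_i| \;\geq\; \bigl(1 + \eps_1/(2C\log^2 n)\bigr)\,|A_i|. \]
Iterating this inequality shows that $|A_i|$ reaches $n/2$ within $O(\log^3 n/\eps_1)$ steps, comfortably within $r$.

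One small caveat is that the expansion bound is trivial while $|A_i| < \eps_2 d/2$. I would handle this by taking a single preliminary BFS step: any vertex of $A$ has at least $d - 1$ neighbors in $G$ by the minimum degree hypothesis, so $|A_1| \geq d - |W|$, which already clears the threshold $\eps_2 d/2$ provided $d_0$ is large. The same applies to $B_1$. After the remaining $r - 1$ expansion steps on each side, both $|A_r|, |B_r| > n/2$; since $A_r, B_r \subseteq V(G) \setminus W$ and $|W|$ is much smaller than $n/2$, the two sets intersect, completing the argument.

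The main obstacle is to control the error $|W|$ uniformly across all scales of the growing ball, in particular so that it remains negligible both when $|A_i|$ is close to $x$ and when it is close to $n/2$. The hypothesis $|W|\log^3 n \leq 10x$ is tailored exactly for this: coupling it with the invariant $|A_i| \geq x$ converts it into the scale-invariant bound $|W| \leq \tfrac12\eps(|A_i|)|A_i|$ needed above. Once this bookkeeping is set up and the small-seed case is dispatched by the minimum degree, the rest is a routine ball-growing calculation in the style of Koml\'os--Szemer\'edi.
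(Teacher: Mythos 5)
Your proof is correct and follows essentially the same route as the source: the paper does not prove this lemma itself but imports it from Liu and Montgomery, where the argument is exactly this Koml\'os--Szemer\'edi-style ball-growing — one minimum-degree step to clear the $\eps_2 d/2$ threshold, iterated sublinear expansion with $|W|$ absorbed via $|W|\le 10x/\log^3 n \le 10|A_i|/\log^3 n$, and intersection of the two balls once each exceeds $n/2$. No gaps worth noting.
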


It is well known that every graph $G$ has a bipartite subgraph $H$ with $d(H)\geq d(G)/2$.
The next corollary follows immediately from this fact and Lemma~\ref{thm-expander}.

\begin{cor}\label{cor-expander}
There exists some $\eps_1>0$ such that the following holds for every $\eps_2>0$ and $d\in \N$. Every graph $G$ with $d(G)\geq 8d$ has a bipartite $(\eps_1,\eps_2d)$-expander subgraph $H$ with $\delta(H)\geq d$.
\end{cor}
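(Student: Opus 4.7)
The plan is to combine the well-known bipartite halving trick with Lemma~\ref{thm-expander} in a direct two-step reduction. First, given $G$ with $d(G)\geq 8d$, I would extract a bipartite subgraph $H'\subseteq G$ with $d(H')\geq d(G)/2 \geq 4d$. This is the standard probabilistic argument: randomly $2$-colour $V(G)$ and keep only the edges crossing between the two colour classes, so that each edge survives with probability $1/2$ and some outcome realises at least the expected edge count, giving a bipartite $H'$ with at least half the average degree of $G$.

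Next, I would apply Lemma~\ref{thm-expander} to $H'$ with the parameter $k=\eps_2 d$, which produces an $(\eps_1,\eps_2 d)$-expander subgraph $H\subseteq H'$ satisfying $d(H)\geq d(H')/2\geq 2d$ and $\delta(H)\geq d(H)/2\geq d$. Since $H$ is a subgraph of the bipartite graph $H'$, it automatically inherits bipartiteness, so $H$ simultaneously meets all three conditions demanded by the corollary: bipartite, $(\eps_1,\eps_2 d)$-expander, and $\delta(H)\geq d$.

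The only step that needs attention is checking that the constants line up. The factor-of-two loss at each halving gives the chain $d(G)\geq 8d \mapsto d(H')\geq 4d \mapsto d(H)\geq 2d \mapsto \delta(H)\geq d$, which is exactly why the hypothesis is stated as $d(G)\geq 8d$ rather than anything smaller. There is no genuine obstacle in this argument beyond that bookkeeping, because both ingredients are used as black boxes and the expansion parameter $\eps_2 d$ is inherited directly from the choice $k=\eps_2 d$ when invoking Lemma~\ref{thm-expander}; in particular the constant $\eps_1$ of the corollary is the same $\eps_1$ supplied by Lemma~\ref{thm-expander}.
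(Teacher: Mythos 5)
Your proposal is correct and is exactly the argument the paper intends: it states that Corollary~\ref{cor-expander} "follows immediately" from the bipartite-halving fact and Lemma~\ref{thm-expander}, and your two-step chain $d(G)\geq 8d \Rightarrow d(H')\geq 4d \Rightarrow d(H)\geq 2d \Rightarrow \delta(H)\geq d$, with $k=\eps_2 d$ and bipartiteness inherited by taking a subgraph of $H'$, is precisely that deduction.
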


The following notation (also see \cite{liu2020proof}) is convenient as we often work in a bipartite graph.

\begin{defn}[\cite{liu2020proof}] For any connected bipartite graph $H$ and $u,v\in V(H)$, let\label{pidefn}
$$
\pi(u,v,H)=\left\{\begin{array}{ll}
0 & \text{ if }u=v, \\
1 & \text{ if $u$ and $v$ are in different vertex classes in the (unique) bipartition of $H$},\\
2 & \text{ if $u$ and $v$ are in the same vertex class and $u\neq v$}.
\end{array}
\right.
$$
\end{defn}

\subsection{Liu-Montgomery adjusters}\label{sec:adjusters}

First, we need the definition below from Liu and Montgomery \cite{liu2020proof}.

\begin{defn}[\cite{liu2020proof}]
Given a vertex $v$ in a graph $F$, $F$ is a \emph{$(D,m)$-expansion of $v$} if $|F|=D$ and $v\in V(F)$ is at distance at most $m$ in $F$ from any other vertex of $F$.
\end{defn}

Expansion has the following trimming property.

\begin{prop}[\cite{liu2020proof}] \label{prop-trimming} Let $D,m\in \N$ and $1\leq D'\leq D$. Then, any graph $F$ which is a $(D,m)$-expansion of $v$ contains a subgraph which is a $(D',m)$-expansion of $v$.
\end{prop}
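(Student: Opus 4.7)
The plan is to reduce the statement to a simple trimming argument on a BFS tree rooted at $v$.

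First I would take a BFS tree $T$ of $F$ rooted at $v$. Because $F$ is a $(D,m)$-expansion of $v$, every vertex of $F$ is at distance at most $m$ from $v$ in $F$, and in a BFS tree the tree-distance from the root equals the graph-distance from the root. So $T$ is a spanning tree of $F$ on $D$ vertices in which every vertex lies within distance at most $m$ from $v$; in particular $T$ itself is a $(D,m)$-expansion of $v$ sitting inside $F$ as a subgraph.

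Next I would shrink $T$ to the desired size by iteratively deleting leaves other than $v$. Start with $T_0 := T$, and while $|T_i| > D'$, pick a leaf $u_i \neq v$ of $T_i$ (such a leaf exists because any finite tree on at least two vertices has at least two leaves, so at least one leaf is different from $v$) and set $T_{i+1} := T_i - u_i$. Each $T_{i+1}$ is still a tree containing $v$, and deleting a leaf cannot increase any distance from $v$ to the surviving vertices, so every vertex of $T_{i+1}$ is still at distance at most $m$ from $v$ in $T_{i+1}$. After exactly $D - D'$ steps we obtain a tree $T' = T_{D-D'}$ on $D'$ vertices that contains $v$, is a subgraph of $F$, and is a $(D',m)$-expansion of $v$. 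Since $D' \ge 1$ we never need to delete $v$, so the procedure is well defined throughout.

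There is no real obstacle here; the only thing to check carefully is the invariant that distances from $v$ stay bounded by $m$ after each leaf deletion, which is immediate because removing a leaf does not destroy any $v$-to-$x$ path for the remaining vertices $x$ (none of those paths used the deleted leaf as an internal vertex). Thus $T'$ witnesses the proposition.
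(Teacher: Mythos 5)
Your proof is correct, and it is essentially the standard argument for this fact (the paper cites it from Liu--Montgomery without proof, and their proof is the same BFS-tree leaf-deletion trimming). The key invariant you identify---that deleting a non-root leaf of the BFS tree preserves all remaining distances from $v$---is exactly what makes the argument go through.
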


The following is a technical lemma that allows us to find one set within a collection that expands to larger size. 
We remark that the same proof (by setting $\alpha=2^{-10}$) of the original Lemma 3.7 in \cite{liu2020proof} could imply a stronger conclusion that there exists a set that expands to size $\exp((\log \log n)^{200})$, instead of $\log^k n$.

\begin{lemma}[\cite{liu2020proof}] \label{lem:small-set-expansion-new}
For each $0 < \eps_1 < 1$ and $0 < \eps_2 < 1/5$, there exists $d_0 = d_0(\eps_1, \eps_2)$ such that the following holds for each $n \ge d \ge d_0$.
Suppose that $G$ is an $n$-vertex bipartite $(\eps_1, \eps_2d)$-expander with $\delta(G) \ge d$.
Let $U \subseteq V(G)$ satisfy $|U| \le \exp((\log \log n)^{200})$.
Let $r = n^{1/8}$ and $\ell_0 = (\log \log n)^{10^6}$.
Let $m = \frac{800}{\eps_1} \log^3 n$. 
Suppose $(A_i,B_i,C_i), i \in [r]$ are such that the following holds for each $i \in [r]$. 
  \stepcounter{propcounter}
  \begin{enumerate}[label = {\bfseries \Alph{propcounter}\arabic{enumi}}]
  \item $|A_i| \ge d_0$.
  \item $B_i \cup C_i$ and $A_i$ are disjoint sets in $V(G) \setminus U$, with $|B_i| \le |A_i|/\log^{1000}|A_i|$.
  \item $A_i$ has $4$-limited contact with $C_i$ in $G - U - B_i$.
  \item Each vertex in $B^{\ell_0}_{G-U-B_i-C_i}(A_i)$ has at most $d/2$ neighbours in $U$.
  \item For each $j \in [r]\setminus \{i\}$, $A_i$ and $A_j$ are at least at distance $2\ell_0$ apart in $G-U-B_i-C_i-B_j-C_j$.
\end{enumerate}
Then, for some $i \in [r]$, $|B^{\ell_0}_{G-U-B_i-C_i}(A_i)| \ge \exp((\log \log n)^{200})$.
\end{lemma}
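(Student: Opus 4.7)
The plan is to argue by contradiction. Suppose that for every $i \in [r]$ we have $|B^{\ell_0}_{G-U-B_i-C_i}(A_i)| < \exp((\log \log n)^{200})$, and derive a contradiction from the sublinear expansion property of $G$ combined with the structural conditions \textbf{A1}–\textbf{A5}. Set $V_i^j := B^{j}_{G - U - B_i - C_i}(A_i)$ for $0 \le j \le \ell_0$ and let $V^j := \bigcup_{i \in [r]} V_i^j$. By \textbf{A5}, any two $A_i, A_j$ ($i \neq j$) are at distance at least $2\ell_0$ apart in $G - U - B_i - C_i - B_j - C_j$, so the sets $V_i^j$ are pairwise disjoint for every $j \le \ell_0$, and the union above is in fact a disjoint union.

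Next I would iteratively grow these balls. At step $j$, I would apply the Koml\'os--Szemer\'edi sublinear expansion property (using that $G$ is an $(\eps_1, \eps_2 d)$-expander) to the set $V^j$: once $|V^j| \ge \eps_2 d/5$, the external neighborhood in $G$ has size at least $\eps(|V^j|)\,|V^j| = \Omega(|V^j|/\log^2|V^j|)$. Each such neighbor either lies in the ``deleted'' region (some $B_i$, some $C_i$, or $U$) or contributes to $V^{j+1} \setminus V^j$. I would budget the losses as follows: \textbf{A4} bounds the edges from $V_i^j \subseteq B^{\ell_0}_{G-U-B_i-C_i}(A_i)$ into $U$ by $(d/2)|V_i^j|$, which is a tiny fraction of the total $\delta(G)|V^j| \ge d|V^j|$ incident edges, so losses to $U$ are swallowed by a $(1-o(1))$ factor; \textbf{A2} makes $|B_i| \le |A_i|/\log^{1000}|A_i| \le |V_i^j|/\log^{1000}|V_i^j|$ negligible; and \textbf{A3}, the $4$-limited contact of $A_i$ with $C_i$, caps the number of expansion vertices absorbed into $C_i$ by a small multiple of $|V_i^j|$. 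Thus a positive fraction of the sublinear expansion genuinely contributes to $V^{j+1}\setminus V^j$.

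The reason to consider the union $V^j$ rather than a single $V_i^j$ is precisely to exceed the expansion threshold $\eps_2 d/5$ already at $j = 0$: by \textbf{A1} we have $|V^0| = \sum_i |A_i| \ge r d_0 = n^{1/8} d_0$, which together with $n \ge d$ pushes us above the threshold for large $n$. (If some single $A_i$ is already large the same argument works for just that $i$.) Compounding the per-step multiplicative factor $1 + \Omega(\eps_1/\log^2|V^j|)$ over $\ell_0 = (\log\log n)^{10^6}$ steps easily yields $|V^{\ell_0}| \ge r \cdot \exp((\log\log n)^{200})$, because reaching the target $\exp((\log\log n)^{200})$ from a polynomial seed only requires a doubling count of $(\log\log n)^{O(1)}$ and each doubling costs roughly $(\log\log n)^{O(1)}$ steps. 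Since $V^{\ell_0} = \bigsqcup_i V_i^{\ell_0}$ with $r = n^{1/8}$ terms, pigeonhole produces some $i$ with $|V_i^{\ell_0}| \ge \exp((\log\log n)^{200})$, contradicting our assumption.

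The main obstacle is the careful bookkeeping at each expansion step: the sets $B_i, C_i$ depend on the index $i$, so when I apply expansion to $V^j$ in $G$ I need to show that the expansion output, after deleting the edges into $U \cup \bigcup_i (B_i \cup C_i)$, still contains enough vertices that remain in the correct $V_i^{j+1}$ (i.e., a neighbor of $V_i^j$ should not be killed because it happens to lie in some $B_{i'} \cup C_{i'}$ for $i' \neq i$). This is exactly where the pairwise distance $2\ell_0$ in \textbf{A5}, the 4-limited contact in \textbf{A3}, and the bound on $|U|$ combine to ensure that such ``cross-interference'' is negligible, allowing the losses to be charged locally to each $V_i^j$ rather than globally.
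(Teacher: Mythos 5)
The paper does not actually prove this lemma --- it is quoted from Liu and Montgomery's Lemma~3.7 with adjusted parameters --- but your proposal has a quantitative flaw that sinks it regardless of which proof one compares it to: you cannot run the expansion on the union $V^j=\bigcup_i V_i^j$. Once you aggregate $r=n^{1/8}$ sets, $|V^j|\geq n^{1/8}$, so the rate supplied by the sublinear expansion property is $\eps(|V^j|)=\eps_1/\log^2\bigl(15|V^j|/\eps_2 d\bigr)=O\bigl(\eps_1/\log^2 n\bigr)$ whenever $d$ is well below $n^{1/8}$ (which is exactly the regime, $d\leq \log^s n$, in which this lemma is used). Over $\ell_0=(\log\log n)^{10^6}$ steps the compounded growth factor is at most $\exp\bigl(O(\eps_1\ell_0/\log^2 n)\bigr)=1+o(1)$: the union cannot even double, let alone reach the size $r\cdot\exp((\log\log n)^{200})$ that your final pigeonhole step needs. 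Your estimate that \emph{each doubling costs $(\log\log n)^{O(1)}$ steps} is valid only for a set of size $\exp((\log\log n)^{O(1)})$, i.e.\ for a single ball under the contradiction hypothesis, not for the union of $n^{1/8}$ of them. A second structural problem with the union strategy is that $|N_G(\bigcup_i X_i)|\geq\eps(\cdot)\,|\bigcup_i X_i|$ gives no lower bound on any individual $|N_G(X_i)|$, so even if the union grew you could not redistribute that growth to the separate $V_i^{j+1}$ before applying pigeonhole.

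The deeper missing idea is the actual role of $U$ and of having $n^{1/8}$ candidates. For a single ball of size $x\leq\exp((\log\log n)^{200})$ the guaranteed expansion is only $\eps(x)x$, which can be far smaller than $|U|\leq\exp((\log\log n)^{200})$ (e.g.\ $x\approx d$ gives $\eps(x)x\approx\eps_1\eps_2 d/(\log\log n)^{400}$); hence $U$ can absorb the \emph{entire} boundary of one ball. Condition \textbf{A4} does not prevent this: it bounds the number of $U$-neighbours per vertex by $d/2$ (which, against $\delta(G)\geq d$, is one half, not a ``tiny fraction''), and says nothing about what fraction of the vertex boundary $N_G(V_i^j)$ lands in $U$. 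The whole content of the lemma is the argument --- entirely absent from your proposal --- that the small set $U$ cannot simultaneously half-block all $n^{1/8}$ pairwise-distant balls; this is why the correct proof proceeds ball by ball, locating for each failing $i$ a level where more than half of $\eps(x)x$ is absorbed by $U$ (after discounting $B_i,C_i$ via \textbf{A2}--\textbf{A3}) and then deriving a contradiction with $|U|$. Two further slips: \textbf{A5} does not yield disjointness of $V_i^j$ and $V_{i'}^j$ as you claim, since the path from $A_i$ to a common vertex lives in $G-U-B_i-C_i$ and may pass through $B_{i'}\cup C_{i'}$, so the concatenated walk need not lie in the graph where the distance hypothesis holds; and $|V^0|\geq n^{1/8}d_0$ does not exceed the threshold $\eps_2 d/2$ for all $d\leq n$ (take $d=\sqrt n$), so that step is a non sequitur as written.
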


Liu and Montgomery \cite{liu2020proof} introduced a structure called \textit{adjuster} which contains paths of lengths that belong to a long arithmetic progression of difference $2$. 
We can use this structure to adjust a path to the desired length. 

\begin{defn}[\cite{liu2020proof}] \label{defn-adj}
	A \emph{$(D,m,k)$-adjuster} $\cA=(v_1,F_1,v_2,F_2,A)$ in a graph $G$ consists of vertices $v_1,v_2\in V(G)$, graphs $F_1,F_2\subseteq G$ and a vertex set $A\subseteq V(G)$ such that the following hold for some $\ell\in\N$.
  \stepcounter{propcounter}
  \begin{enumerate}[label = {\bfseries \Alph{propcounter}\arabic{enumi}}]
  \item $A$,  $V(F_1)$ and $V(F_2)$ are pairwise disjoint.\label{d-a-1}
  \item For each $i\in [2]$, $F_i$  is a $(D,m)$-expansion of $v_i$.\label{d-a-2}
  \item $|A|\leq 10mk$.\label{d-a-3}
  \item For each $i\in \{0,1,\ldots,k\}$, there is a $v_1,v_2$-path in $G[A\cup \{v_1,v_2\}]$ with length $\ell+2i$.\label{d-a-4}
\end{enumerate}
We call the smallest such $\ell$ for which these properties hold the \emph{length of the adjuster} and denote it $\ell(\cA)$. Note that it immediately follows that $\ell(\cA)\leq |A|+1\leq 10mk+1$. We call a $(D,m,1)$-adjuster a \emph{{simple adjuster}}. 
%We refer to the subgraphs $F_1$ and $F_2$ of an adjuster $\cA=(v_1,F_1,v_2,F_2,A)$ as the \emph{ends} of the adjuster, and
Let $V(\cA)=V(F_1)\cup V(F_2)\cup A$.
\end{defn}

Note that if $m > m'$, then a $(D,m',k)$-adjuster is also a $(D,m,k)$-adjuster.
Lemma 4.3 in \cite{liu2020proof} showed that there exists a simple adjuster in every expander without $\tk^{(2)}_{d/2}$ even after removing a moderate size of vertices. 
We remark that Lemma \ref{lem:small-set-expansion-new} allows us to prove the following strengthening variant.
(The same proof works by setting $\Delta=\exp((\log \log n)^{400}), \ell_0=(\log \log n)^{10^6}$ and $|Z|=\exp((\log \log n)^{400})$ in the original proof.)

\begin{lemma}[\cite{liu2020proof}] \label{lem:simple-adjuster-robust-new}
There exists some $\eps_1 > 0$ such that, for any $0 < \eps_2 < 1/5$, there exists $d_0 = d_0(\eps_1, \eps_2)$ such that the following holds for each $n \ge d \ge d_0$.
Suppose that $G$ is an $n$-vertex $\tk^{(2)}_{d/2}$-free bipartite $(\eps_1, \eps_2d)$-expander with $\delta(G) \ge d$.
Let $m = \frac{200}{\eps_1} \log^3 n$ and $|D| \le \exp((\log \log n)^{200})$.
Let $U \subseteq V(G)$ such that $|U| \le \exp((\log \log n)^{200})$. 

Then, $G-U$ contains a $(D,m,1)$-adjuster.
\end{lemma}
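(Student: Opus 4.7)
The plan is to mimic the proof of Lemma~4.3 from \cite{liu2020proof} with the parameter substitutions $\Delta = \exp((\log\log n)^{400})$, $\ell_0 = (\log\log n)^{10^6}$, $|Z| = \exp((\log\log n)^{400})$ indicated in the statement, invoking the strengthened Lemma~\ref{lem:small-set-expansion-new} in place of the original small-set expansion lemma. The only substantive change is quantitative: wherever the original proof used $\log^k n$ as the size of an expanded neighbourhood, I now use $\exp((\log\log n)^{200})$, and the intermediate $Z$ and $\Delta$ parameters are scaled up accordingly.

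First I would greedily build $r = n^{1/8}$ ``far apart'' seed triples $(A_i, B_i, C_i)_{i \in [r]}$ in $G - U$ satisfying the hypotheses \textbf{A1}--\textbf{A5} of Lemma~\ref{lem:small-set-expansion-new}. Each $A_i$ is taken to be a truncated BFS ball around some vertex $x_i\in V(G)\setminus U$ whose neighbours mostly avoid $U$; $B_i$ absorbs trimming and $C_i$ absorbs local obstructions to $4$-limited contact. Property \textbf{A4} is secured by $|U| \le \exp((\log\log n)^{200}) \ll d$, and \textbf{A5} by forbidding a $2\ell_0$-ball around every previously chosen $A_j \cup B_j \cup C_j$; since each forbidden ball has size at most $\Delta^{O(1)}$, much smaller than $n$, $r = n^{1/8}$ disjoint seeds exist. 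Lemma~\ref{lem:small-set-expansion-new} then yields an index $i$ with $B := B^{\ell_0}_{G - U - B_i - C_i}(A_i)$ of size at least $\exp((\log\log n)^{200})$, and setting $F := G[B]$, every vertex of $F$ lies within distance $\ell_0 \ll m := \tfrac{200}{\eps_1}\log^3 n$ of $A_i$, so Proposition~\ref{prop-trimming} lets me trim $F$ to a $(D, m)$-expansion around any chosen vertex.

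The main step, and the place I expect most of the difficulty to lie, is extracting the simple adjuster from $F$: I need vertices $v_1, v_2 \in V(F)$ together with two internally disjoint $v_1,v_2$-paths of lengths differing by exactly~$2$, disjoint from two $(D, m)$-expansions of $v_1$ and $v_2$. Following \cite{liu2020proof}, I grow two large ball-expansions around candidate pairs inside $F$; the $\tk^{(2)}_{d/2}$-freeness of $G$ prevents these neighbourhoods from collapsing onto a single rigid length-$2$ structure, so Lemma~\ref{new-connect} supplies a short connecting path, and by rerouting through an alternate subpath via a disjoint portion of the expanded ball one obtains a second connecting path whose length differs by~$2$. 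The ``used'' set $Z$ of size $\exp((\log\log n)^{400})$ remains within the budget $|W|\log^3 n \le 10x$ of Lemma~\ref{new-connect} since $|B|$ is exponentially larger than $|Z|$. Choosing $v_1, v_2$ to be the midpoints of these paths, and taking $F_1, F_2$ to be disjoint $(D, m)$-expansions in $F$ minus the adjusting set $A$, gives \textbf{D1}--\textbf{D4} with $|A| \le 10m$, as in the original proof.
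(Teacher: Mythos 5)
Your proposal matches the paper's treatment: the paper gives no independent proof of Lemma~\ref{lem:simple-adjuster-robust-new}, but merely remarks that the proof of Lemma~4.3 of \cite{liu2020proof} goes through verbatim with the substitutions $\Delta=\exp((\log \log n)^{400})$, $\ell_0=(\log\log n)^{10^6}$, $|Z|=\exp((\log\log n)^{400})$, using the strengthened Lemma~\ref{lem:small-set-expansion-new} in place of the original small-set expansion lemma, which is exactly the route you take (and your sketch of the underlying Liu--Montgomery argument is consistent with it).
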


Therefore, as Lemma 4.7 in \cite{liu2020proof}, we obtain the following variant that connects simple adjusters into a larger adjuster robustly.

\begin{lemma}[\cite{liu2020proof}] \label{lem:large-adjuster}
There exists some $\eps_1 > 0$ such that, for any $0 < \eps_2 < 1/5$, there exists $d_0 = d_0(\eps_1, \eps_2)$ such that the following holds for each $n \ge d \ge d_0$.
Suppose that $G$ is an $n$-vertex $\tk^{(2)}_{d/2}$-free bipartite $(\eps_1, \eps_2d)$-expander with $\delta(G) \ge d$.

Let $m = \frac{800}{\eps_1} \log^3 n$. 
Suppose $\log^{10} n \le D \le \exp((\log \log n)^{100})$, $1 \le r \le 20m$ and $U \subseteq V(G)$ with $|U| \le \exp((\log \log n)^{100})$.

Then, there is a $(D,m,r)$-adjuster in $G - U$.
\end{lemma}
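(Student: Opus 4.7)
The plan is to build the $(D,m,r)$-adjuster by induction on $r$, chaining together simple adjusters produced by Lemma~\ref{lem:simple-adjuster-robust-new}. The base case $r=1$ is immediate: Lemma~\ref{lem:simple-adjuster-robust-new} supplies a $(D,m_0,1)$-adjuster with $m_0=(200/\eps_1)\log^3 n=m/4$, and any such adjuster is also a $(D,m,1)$-adjuster since $m_0<m$. For the inductive step, suppose a $(D,m,r-1)$-adjuster $\cA=(v_1,F_1,v_2,F_2,A)$ has been constructed in $G-U$. Using $|V(\cA)|\le 2D+10m(r-1)\le 3\exp((\log\log n)^{100})$ together with $|U|\le\exp((\log\log n)^{100})$, the set $U_1:=U\cup V(\cA)$ still satisfies $|U_1|\le\exp((\log\log n)^{200})$, so Lemma~\ref{lem:simple-adjuster-robust-new} applied with $U_1$ in place of $U$ yields a fresh simple adjuster $\cA'=(u_1,F_1',u_2,F_2',A')$ in $G-U_1$, vertex-disjoint from $\cA$.

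The next step is to join $\cA$ to $\cA'$ by a short path $P$ from $v_2$ to $u_1$ whose interior avoids $W:=U\cup V(F_1)\cup A\cup A'\cup V(F_2')$ but is allowed to pass through $F_2$ and $F_1'$. I would do so by iteratively growing balls around $v_2$ and around $u_1$ inside $G-W$, invoking the strengthened Lemma~\ref{lem:small-set-expansion-new} with appropriately seeded candidate triples so that each ball reaches size at least $\exp((\log\log n)^{200})$, which comfortably exceeds the threshold $|W|\log^3 n/10$ required by Lemma~\ref{new-connect}. Lemma~\ref{new-connect} then supplies a path between the two balls in $G-W$ of length at most $(40/\eps_1)\log^3 n$, and prepending/appending the short radii back to $v_2$ and $u_1$ yields the desired $P$ with $|V(P)|=O(\log^3 n)$. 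Define the extended adjuster by $\cA_r:=(v_1,F_1,u_2,F_2',A\cup V(P)\cup A')$.

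Verifying the four axioms of Definition~\ref{defn-adj} for $\cA_r$ is then a finite check. Axioms~\ref{d-a-1} and~\ref{d-a-2} follow because $F_1,F_2'$ are inherited $(D,m)$-expansions of $v_1,u_2$ and are disjoint from the new adjustment set by construction. Axiom~\ref{d-a-3} holds because $|A\cup V(P)\cup A'|\le 10m(r-1)+O(\log^3 n)+10m_0\le 10mr$, using $10m_0=2.5m$. Axiom~\ref{d-a-4} follows by concatenation: the $r$ path-length options from $\cA$ (an arithmetic progression of common difference $2$) combine through the fixed-length $P$ with the two path-length options from $\cA'$ (difference $2$) to give $r+1$ distinct $v_1,u_2$-path lengths of the form $\ell(\cA)+(|V(P)|-1)+\ell(\cA')+2(i+j)$ with $i\in\{0,\dots,r-1\}$ and $j\in\{0,1\}$. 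The chief technical hurdle is the connection step: growing both endpoint balls to the required size in $G-W$ even when $|W|$ may be nearly $\exp((\log\log n)^{100})$. This is precisely what the strengthened small-set expansion statement (Lemma~\ref{lem:small-set-expansion-new}) is engineered to handle, paralleling the argument for Lemma~4.7 of~\cite{liu2020proof}.
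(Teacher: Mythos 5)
Your overall strategy---induct on $r$, peel off a fresh simple adjuster from Lemma \ref{lem:simple-adjuster-robust-new} in $G-U-V(\cA)$, join it to the current adjuster by a short path, and absorb the two consumed expansions into the adjustment set---is the standard argument: the paper gives no proof of this lemma (it is quoted from Lemma 4.7 of \cite{liu2020proof} with a remark that the same proof works with the strengthened ingredients), and its dense-regime analogue, Lemma \ref{lem-adj-to-adj-path}, follows exactly this template. Your verification of axioms \ref{d-a-1}, \ref{d-a-3} and \ref{d-a-4}, including the parity bookkeeping that turns $i\in\{0,\dots,r-1\}$ and $j\in\{0,1\}$ into $r+1$ consecutive even increments, is fine.

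The gap is in the connection step. You correctly notice that Lemma \ref{new-connect} cannot be applied directly to $F_2$ and $F_1'$: these may have size only $D=\log^{10}n$, while $|W|\ge|U|$ may be as large as $\exp((\log\log n)^{100})$, so the hypothesis $|W|\log^3 n\le 10x$ fails badly. But your proposed remedy---growing balls around the specific vertices $v_2$ and $u_1$ by ``invoking Lemma \ref{lem:small-set-expansion-new} with appropriately seeded candidate triples''---does not work as described. That lemma is an averaging statement: it guarantees only that \emph{some one} of $n^{1/8}$ pairwise far-apart candidate sets $A_i$ expands to size $\exp((\log\log n)^{200})$, and gives no control over \emph{which} one; it cannot be used to expand the two prescribed sets you need. (Iterating the expander condition directly also fails, since at the start $\eps(|X|)\cdot|X|\approx\log^8 n\ll|W|$.) The standard repair, consistent with how the paper deploys these tools elsewhere (e.g.\ in Lemma \ref{lem-finalconnect-sparse2}), is to request the simple adjusters from Lemma \ref{lem:simple-adjuster-robust-new} with expansion size $D''=\exp((\log\log n)^{200})$---which that lemma permits---so that the two expansions consumed by the connecting path satisfy $10D''\gg|W|\log^3 n$ and Lemma \ref{new-connect} applies, and then to trim the two terminal expansions of the combined adjuster down to size $D$ via Proposition \ref{prop-trimming}. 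With that substitution your induction goes through; as written, the connection step is unjustified.
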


\subsection{Connecting vertices by paths of specific lengths}

Liu and Montgomery (see Corollary 3.15 in \cite{liu2020proof}) proved the existence of two vertex disjoint paths in an expander graph so that the sum of their lengths is close to the desired length while avoiding a set of moderate size. 
%We will use the following lemma in our proof of dense case. 

\begin{lemma}[\cite{liu2020proof}]\label{longconnect4}
For any $0<\eps_1, \eps_2<1$, there exists $d_0=d_0(\eps_1,\eps_2)$ such that the following holds for each $n\geq d\geq d_0$. Suppose that $G$ is an $n$-vertex bipartite $(\eps_1,\eps_2 d)$-expander with $\de(G)\ge d$.

Let $\log^{10}n\leq D\leq n/\log^{10}n$, $\frac{100}{\ep_1}\log^3n\le m\le \log^4n$ and $\ell\leq n/\log^{10}n$. Let $A\subseteq V(G)$ satisfy $|A|\leq D/\log^3n$. Let $F_1,\ldots,F_4\subseteq G-A$ be vertex disjoint subgraphs and $v_1,\ldots,v_4$ be vertices such that, for each $i\in [4]$, $F_i$ is a $(D,m)$-expansion of $v_i$.

Then, $G-A$ contains vertex disjoint paths $P$ and $Q$ with $\ell \leq \ell(P)+\ell(Q)\leq \ell+20m$ such that both $P$ and $Q$ connect $\{v_1,v_2\}$ to $\{v_3,v_4\}$.
\end{lemma}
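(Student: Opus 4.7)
The plan is to build the two vertex-disjoint paths $P$ and $Q$ sequentially, each by a BFS expansion from the prescribed $(D,m)$-expansions $F_i$ followed by one short connector supplied by Lemma~\ref{new-connect}. Without loss of generality I aim for $P$ going $v_1\leftrightarrow v_3$ and $Q$ going $v_2\leftrightarrow v_4$; the other pairing is symmetric and may be used to shift parity if needed.

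To construct $P$, grow BFS frontiers in $G-A-V(F_2)-V(F_4)$ starting simultaneously from $V(F_1)$ and from $V(F_3)$. Both starting sets have size $D\ge \log^{10}n$, so by the $(\eps_1,\eps_2d)$-expansion of $G$ together with $\delta(G)\ge d$, standard sublinear-expansion estimates show that each frontier grows past $D/\log^{3}n$ within $O(\log^3 n)$ layers and stays that large until depth $\approx n/\log^{10}n$. Run BFS on the two sides to respective depths $t_1$ and $t_3$, then apply Lemma~\ref{new-connect} with $W:=A$ (valid since $|A|\log^3 n \le D \le$ frontier size) to obtain a connector $R$ between the two frontiers of length at most $(40/\eps_1)\log^3 n \le 0.4m$. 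Tracing back along the BFS layers to the starting sets $V(F_1),V(F_3)$, and then through $F_1,F_3$ (each contributing an extra path of length at most $m$ back to the core vertex), produces $P$ from $v_1$ to $v_3$ with $\ell(P)\le t_1+t_3+\ell(R)+2m$. Varying the choice of $t_1+t_3$ lets $\ell(P)$ hit any target sub-window of width $O(m)$ inside $[\Omega(m),\ell]$.

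For $Q$, repeat the same construction starting from $v_2$ and $v_4$ but working inside $G-A-V(P)-V(F_1)-V(F_3)$. Since $|V(P)|\le \ell+O(m)\le 2n/\log^{10}n$, the enlarged deletion set $W':=A\cup V(P)$ still satisfies $|W'|\log^3 n$ being negligible compared to the BFS ball sizes achieved by the new BFS (which again reach at least $D/\log^3 n$), so Lemma~\ref{new-connect} applies exactly as before. Choose BFS depths $t_2,t_4$ so that $\ell(Q)$ lands in precisely the window needed to force $\ell(P)+\ell(Q)\in[\ell,\ell+20m]$.

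The main obstacle will be length control. BFS depths are integers chosen by us, but the connectors coming out of Lemma~\ref{new-connect} are only known up to an upper bound $(40/\eps_1)\log^3 n \le 0.4m$. The $20m$ slack in the statement is exactly what absorbs these two unknown connector contributions together with the up-to-$m$ freedom inside each $F_i$, so no adjuster mechanism is required here. A minor subtlety is parity: in the bipartite graph $G$ the lengths $\ell(P),\ell(Q)$ each have parity fixed by the bipartition classes of their respective endpoints, so one must verify that the target interval $[\ell,\ell+20m]$ contains a compatible pair of lengths; this is comfortably true since $20m\gg 2$, and a failure can always be rectified by swapping to the alternative endpoint pairing.
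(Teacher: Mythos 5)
First, a bookkeeping point: the paper does not prove Lemma~\ref{longconnect4} at all --- it is imported verbatim from \cite{liu2020proof} (Corollary~3.15 there), so there is no in-paper proof to compare against. Judged on its own, your proposal has a fatal gap in the one place where the lemma is actually hard: producing paths whose \emph{total} length comes close to a \emph{large} prescribed $\ell$.

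The gap is your claim that the BFS frontier from $V(F_1)$ ``stays that large until depth $\approx n/\log^{10}n$,'' and that you can therefore choose depths $t_1,t_3$ with $t_1+t_3$ anywhere up to $\ell$. In an $(\eps_1,\eps_2 d)$-expander, a set of size $D\ge \log^{10}n$ grows by a factor of at least $1+\eps_1/\log^2(15n/\eps_2 d)$ per step only while its ball has size at most $n/2$; consequently the ball reaches size $n/2$ after $O(\log^3 n/\eps_1)$ steps, and the expansion hypothesis gives no information beyond that. Indeed, any two such balls of size exceeding $n/2$ intersect, so \emph{every} vertex of $G-A$ lies within distance $O(\log^3 n/\eps_1)\le m$ of $V(F_1)$: there simply need not exist any vertex at distance $t_1$ from $V(F_1)$ once $t_1\gg \log^3 n/\eps_1$. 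Hence ``tracing back along BFS layers'' can only ever yield $\ell(P)+\ell(Q)=O(m)$, which fails to reach $\ell$ whenever $\ell\gg m$ (the statement allows $\ell$ up to $n/\log^{10}n$ while $m\le\log^4 n$). The whole content of the lemma is a mechanism for accumulating length $\approx\ell$ with only $O(m)$ total error; in Liu--Montgomery this is done by an iterative construction threading the path through a long sequence of intermediate vertex sets with careful bookkeeping so that the slack does not add up across steps, not by a single BFS-plus-connector. A secondary, independent error: when you build $Q$ you apply Lemma~\ref{new-connect} with $W'=A\cup V(P)$, where $|V(P)|$ can be as large as $\ell+O(m)$, and you compare $|W'|\log^3 n$ against ball size $D/\log^3 n$; since $D$ may be as small as $\log^{10}n$ while $\ell$ may be $n/\log^{10}n$, the required inequality $|W'|\log^3 n\le 10x$ fails badly unless the balls are first grown to size $\Omega(\ell\log^3 n)$, which you do not arrange. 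The parity discussion at the end is fine but does not touch either problem.
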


Using adjuster structure, they also showed the existence of a path of specific length connecting two given vertices while avoiding a set of moderate size (see Lemma 4.8 in \cite{liu2020proof}). 

\begin{lemma}[\cite{liu2020proof}]\label{lem-finalconnect-sparse}
There exists some $\eps_1>0$ such that, for any $0<\eps_2<1/5$ and $k\geq 10$, there exists $d_0=d_0(\eps_1,\eps_2,k)$ such that the following holds for each $n\geq d\geq d_0$. Suppose that $G$ is an $n$-vertex $\tk_{d/2}^{(2)}$-free bipartite $(\eps_1,\eps_2 d)$-expander with $\de(G)\ge d$.

Suppose $\log^{10} n\leq D\leq \log^kn$, and $U\subseteq V(G)$ with $|U|\leq D/2\log^3n$, and let $m=\frac{800}{\eps_1}\log^3n$. Suppose $F_1,F_2\subseteq G-U$ are vertex disjoint such that $F_i$ is a $(D,m)$-expansion of $v_i$, for each $i\in[2]$. Let $\log^{7}n\leq \ell\leq n/\log^{10}n$ be such that $\ell=\pi(v_1,v_2,G)\mod 2$.

Then, there is a $v_1,v_2$-path with length $\ell$ in $G-U$.
\end{lemma}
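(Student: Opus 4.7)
The plan is to follow the Liu--Montgomery adjusting strategy: build a large adjuster inside $G-U$, connect the endpoints $v_1,v_2$ to the endpoints of that adjuster by two vertex-disjoint paths whose total length lands in the adjuster's flexible range, and then select the unique adjuster path that makes the overall length exactly $\ell$.

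First I would apply Lemma~\ref{lem:large-adjuster} with $r:=20m$ and forbidden vertex set $U\cup V(F_1)\cup V(F_2)$ (by first trimming $F_1,F_2$ via Proposition~\ref{prop-trimming} if needed so that $|V(F_1)\cup V(F_2)|\le 2D$, which keeps the total well within $\exp((\log\log n)^{100})$). This yields a $(D,m,r)$-adjuster $\cA=(u_1,E_1,u_2,E_2,A)$ disjoint from $U,F_1,F_2$, with $|A|\le 10mr$, adjuster length $\ell(\cA)\le 10mr+1=O(\log^6 n)$, and $u_1,u_2$-paths inside $G[A\cup\{u_1,u_2\}]$ of every length in $\{\ell(\cA),\ell(\cA)+2,\ldots,\ell(\cA)+2r\}$.

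Next, set $\ell_*:=\ell-\ell(\cA)-20m$; since $\ell\ge \log^7 n\gg \ell(\cA)+20m$ and $\ell\le n/\log^{10}n$, we have $0\le \ell_*\le n/\log^{10}n$. Applying Lemma~\ref{longconnect4} with target length $\ell_*$, forbidden set $A\cup U$ (of size $\le 10mr+|U|\ll D/\log^3n$), and the four vertex-disjoint expansions $F_1,F_2,E_1,E_2$ produces vertex-disjoint paths $P_1,P_2$ in $G-U-A$ with $\ell_*\le \ell(P_1)+\ell(P_2)\le \ell_*+20m$, each joining $\{v_1,v_2\}$ to $\{u_1,u_2\}$. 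Relabel so that $P_1$ runs from $v_1$ to $u_i$ and $P_2$ from $v_2$ to $u_j$ with $\{i,j\}=\{1,2\}$, and set $L:=\ell-\ell(P_1)-\ell(P_2)$; then $L\in[\ell(\cA),\ell(\cA)+2r]$.

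Bipartiteness pins down the parity. Since $G$ is bipartite, $\ell(P_1)\equiv \pi(v_1,u_i,G)\pmod 2$ and $\ell(P_2)\equiv \pi(v_2,u_j,G)\pmod 2$, and every $u_1,u_2$-path has length $\equiv \pi(u_1,u_2,G)\pmod 2$; combined with the hypothesis $\ell\equiv \pi(v_1,v_2,G)\pmod 2$ one checks (case by case on $\{i,j\}$) that $L\equiv \pi(u_1,u_2,G)\pmod 2$. Hence $L$ is one of the admissible adjuster lengths, and concatenating $P_1$ with the corresponding adjuster path and $P_2$ yields a $v_1,v_2$-path of length exactly $\ell$ in $G-U$. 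The main obstacle is the accounting that lets all three lemmas co-exist on the same configuration: the adjustable range $2r$ must strictly exceed the $20m$-slack of Lemma~\ref{longconnect4}, and every forbidden-vertex set (after committing $U$, $A$, and portions of the expansions) must remain below $D/\log^3 n$; the choice $r=20m$ gives a comfortable margin and the $|U|$-bound in the hypothesis provides the rest.
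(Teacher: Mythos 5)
Your proposal is correct and is essentially the argument the paper uses for this statement's analogues (see the proofs of Lemma~\ref{lem-finalconnect-dense} and Lemma~\ref{lem-finalconnect-sparse2}, which follow the original Liu--Montgomery proof): build a $(D,m,20m)$-adjuster avoiding $U\cup V(F_1)\cup V(F_2)$ via Lemma~\ref{lem:large-adjuster}, connect with Lemma~\ref{longconnect4} so the residual length lands in the adjuster's window, and finish by the bipartite parity argument. The only cosmetic quibble is that no trimming of $F_1,F_2$ is needed since a $(D,m)$-expansion has exactly $D$ vertices by definition.
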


\subsection{Expansion of vertices in sparse graphs}

When maximum degree is bounded, there exist many vertices that are pairwise far apart in the graph (see Proposition 5.3 in \cite{liu2017proof}). 

\begin{lemma}[\cite{liu2017proof}] \label{lem-sparse-far-apart}
Let $s \ge 1$. 
There exists $n_0$ such that the following holds for all $n \ge n_0$.
Suppose $G$ is an $n$-vertex graph with maximum degree at most $\log^{30s} n$. 
Then $G$ contains at least $n^{1/5}$ vertices which are pairwise at distance at least $\log n/(50s \log\log n)$ apart.
\end{lemma}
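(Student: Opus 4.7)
The plan is a straightforward greedy argument that exploits the fact that, with maximum degree bounded by a polylog in $n$, the balls of radius roughly $\log n/\log\log n$ are still substantially smaller than $n$, so we can iteratively pick far-apart vertices while deleting their balls.

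First I set $\Delta := \log^{30s} n$ and $r := \lceil \log n/(50s\log\log n)\rceil$. For any vertex $v \in V(G)$, the ball $B^{r-1}_G(v)$ of radius $r-1$ has size at most
\[
\sum_{i=0}^{r-1}\Delta^{i} \;\le\; 2\Delta^{r-1} \;\le\; 2\Delta^{r}.
\]
The key calculation is
\[
\Delta^{r} \;=\; \exp\!\bigl(30s\log\log n \cdot r\bigr) \;\le\; \exp\!\bigl(30s\log\log n\cdot(\log n/(50s\log\log n)+1)\bigr) \;\le\; n^{3/5+o(1)},
\]
so in particular $|B^{r-1}_G(v)| \le n^{2/3}$ for every $v$, once $n$ is sufficiently large.

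Next I run the obvious greedy selection: start with $S = \emptyset$ and $R = V(G)$, and while $R \neq \emptyset$ pick any $v \in R$, add $v$ to $S$, and remove $B^{r-1}_G(v)$ from $R$. Any two vertices chosen in this way are at distance at least $r$ in $G$, since whenever we add a new vertex it lies outside the $(r-1)$-ball of every previously chosen vertex. Each step deletes at most $n^{2/3}$ vertices from $R$, so the process runs for at least $n/n^{2/3} = n^{1/3}$ steps, producing a set $S$ of at least $n^{1/3} \ge n^{1/5}$ vertices that are pairwise at distance at least $r \ge \log n/(50s\log\log n)$ apart, for all sufficiently large $n$.

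There is essentially no obstacle here beyond the arithmetic: the only thing to be careful about is that the exponent in $\Delta^r$ really comes out strictly less than $1$, which is why the constants $30s$ and $50s$ in the hypothesis and conclusion are matched so that $30s/(50s) = 3/5 < 1$. Any choice of constants preserving this gap would work identically.
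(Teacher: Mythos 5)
Your greedy argument is correct: the bound $|B^{r-1}_G(v)|\le 2\Delta^{r}\le n^{3/5+o(1)}$ is computed correctly (the exponent $30s/(50s)=3/5<1$ is exactly the point of the matched constants), and iteratively deleting balls of radius $r-1$ yields at least $n^{1/3}\ge n^{1/5}$ pairwise far-apart vertices. This is essentially the same ball-counting greedy proof as in the cited source \cite{liu2017proof}, so there is nothing further to add.
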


We need the following definition.

\begin{defn}[\cite{liu2017proof}]
We say that paths $P_1,\cdots,P_q$, each starting with the vertices $v$ and contained in the vertex set $W$, are \textit{consecutive shortest paths} from $v$ in $W$ if, for each $i$, $1 \le i \le q$, the path $P_i$ is a shortest path between its endpoints in the set $W - \cup_{j < i} P_j + v$.
\end{defn}

We can expand a vertex $v$ to a set of moderate size while avoiding internal vertices of a family of paths if those paths do not intersect balls around $v$ much (see Lemma 5.5 in \cite{liu2017proof}). 
Note that in the following lemma, the condition that $P_1, \cdots, P_q$ are consecutive shortest paths from $v$ in $B_H^r(v)$,
is used to guarantee that for each $p < r$, only the first $p + 2$ vertices of each of the paths $P_i$ including the vertex $v$, can belong in $N_H(B^p_{H-P+v}(v))$.

\begin{lemma}[\cite{liu2017proof}] \label{lem-sparse-small-ball}
Let $0 < \eps_1 < 1, 0 < \eps_2 < 1/20$ and $s \ge 1$.
Then there is some $c>0$ and $d_0 \in \mathbb{N}$ for which the following holds for any $n$ and $d$ with $d_0 \le d \le \log^{20s} n$.
Suppose $H$ is an $n$-vertex $(\eps_1, \eps_2 d)$-expander with $\delta(H) \ge d/16$. 
Let $r=(\log\log n)^5$ and $P = \cup_i V(P_i)$, 
if $q \le cd$ and $P_1, \cdots, P_q$ are consecutive shortest paths from $v$ in $B_H^r(v)$,
then $|B^r_{H-P+v}(v)| \ge 2d^2 \log^{10} n$.
\end{lemma}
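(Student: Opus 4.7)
The plan is to grow the ball $B_p := B^p_{H-P+v}(v)$ step by step for $p = 1, 2, \ldots, r$, showing it reaches size at least $2d^2 \log^{10} n$. I would first prove the structural observation highlighted in the statement: for each $p < r$ and each $i$, only the first $p+2$ vertices of $P_i$ (starting with $v$) can lie in $N_H(B_p)$. This follows by contradiction from the shortest-path property: if $w$ were the $k$-th vertex of $P_i$ with $k \ge p+3$ and $w \in N_H(B_p)$, then some neighbor $u \in B_p$ of $w$ lies in $H - P + v$, so concatenating a length-$\le p$ $v$-to-$u$ path in $H-P+v$ with the edge $uw$ would yield a $v$-to-$w$ path of length $\le p+1$ in $H - \cup_{j<i}P_j + v$. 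This contradicts $P_i$ being a shortest path in that graph, since the subpath of $P_i$ from $v$ to $w$ has length $k-1 \ge p+2$ and subpaths of shortest paths remain shortest.

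This structural fact gives $|N_H(B_p) \cap P| \le q(p+2)$. Together with the identity $N_{H-P+v}(B_p) = N_H(B_p) \setminus P$ and the expansion property of $H$ (applicable since $|B_p| \ge d/32 \ge \eps_2 d/2$ for $\eps_2 < 1/20$), this yields the recursion
\[
|B_{p+1}| \;\ge\; (1 + \eps(|B_p|))\,|B_p| \;-\; q(p+2).
\]
The base case is $|B_1| \ge 1 + \delta(H) - q \ge d/32$, because only the second vertex of each $P_i$ can be a neighbor of $v$ on $P$, and $q \le cd \le d/32$ once $c$ is taken at most $1/32$.

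It then remains to iterate this recursion until $|B_r| \ge 2d^2 \log^{10} n$. Throughout the relevant range $|B_p| \le 2d^2 \log^{10} n$ with $d \le \log^{20s} n$, one has $\eps(|B_p|) = \Omega(1/(s \log\log n)^2)$, so the multiplicative factor accumulated over $r = (\log \log n)^5$ iterations is $\exp(\Omega((\log\log n)^3/s^2))$, comfortably exceeding the needed $O(d \log^{10} n) = \log^{O(s)} n$. Choosing $c = c(\eps_1, \eps_2, s)$ as a sufficiently small positive constant, one verifies that the additive loss $q(p+2)$ per step cannot overwhelm this growth.

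The main obstacle lies in the last verification: the per-step loss $q(p+2)$ can at first glance dominate the per-step growth $\eps(|B_p|)|B_p|$ while $|B_p|$ is still small. To handle this, I would split the iteration into an initial phase, during which $|B_p|$ remains of order $d$ and the expansion rate $\eps(|B_p|)$ is essentially constant $\Theta(\eps_1)$, so that $|B_p|$ multiplies by a definite factor within a short burst of steps before the cumulative loss $O(cd(p+2)^2)$ catches up; and a subsequent geometric-growth phase, in which $|B_p|$ has risen well above the ``fixed point'' $cdr/\eps(|B_p|)$ of the linear recurrence and therefore grows exponentially to the target size within the remaining iterations.
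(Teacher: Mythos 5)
The paper does not actually prove this lemma: it is quoted from \cite{liu2017proof} (Lemma 5.5 there), and the author explicitly omits the proof, saying only that the exponents of $\log n$ have been altered to fit the present application. So the relevant comparison is with the cited original, whose strategy your sketch correctly reconstructs: the structural observation that only the first $p+2$ vertices of each $P_i$ can meet $N_H(B_p)$ (which the paper itself records in the remark just before the lemma), the resulting recursion $|B_{p+1}|\ge(1+\eps(|B_p|))|B_p|-q(p+2)$, and an iteration up to radius $r$. Your proofs of the structural fact (via subpaths of shortest paths), of the recursion, and of the base case $|B_1|\ge d/32$ are all correct, as is the observation that the expander condition applies throughout since $|B_p|\ge d/32>\eps_2 d/2$.

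The gap is in the handoff between your two phases. The initial phase, as you describe it, keeps $|B_p|$ of order $d$ with expansion rate $\Theta(\eps_1)$, so it can only certify $|B_p|\ge Cd$ for a constant $C$. But your geometric phase requires $|B_p|$ to sit well above the fixed point $cdr/\eps(|B_p|)$; with $r=(\log\log n)^5$ this threshold is at least of order $(c/\eps_1)\,d(\log\log n)^5$ however small the constant $c>0$ is, and no constant multiple of $d$ exceeds it for large $n$. So there is an unaddressed middle regime, with $|B_p|$ between $\Theta(d)$ and roughly $d\,(\log\log n)^7$ (the threshold computed with the endgame rate $\eps=\Theta(\eps_1/(s\log\log n)^2)$). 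This regime is genuinely delicate, because the total loss $\sum_{p<r}q(p+2)=O(cdr^2)=O(cd(\log\log n)^{10})$ exceeds $d$ by an unbounded factor, so the losses cannot simply be absorbed into the starting value $d/32$. The standard repair, which is what the cited proof carries out: for $d/64\le|B_p|\le d(\log\log n)^{10}$ one still has $\eps(|B_p|)\ge\eps_1/(11\log\log\log n)^2$, and writing $y_p=\log(15|B_p|/\eps_2 d)$ the loss-free recursion gives $y_{p+1}\ge y_p+\Omega(\eps_1/y_p^2)$, hence $|B_p|\ge d\exp(\Omega_{\eps_1}(p^{1/3}))$; this super-polynomial growth in $p$ outruns the cumulative polynomial loss $O(cdp^2)$ once $p$ exceeds a constant $p_1(\eps_1,\eps_2)$, and choosing $c$ small in terms of $p_1$ controls the first $p_1$ steps. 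With that middle phase inserted your argument closes and coincides with the original.
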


We can further expand a set of moderate size to a large set avoiding a set of vertices (see Proposition 5.6 in \cite{liu2017proof}).

\begin{lemma}[\cite{liu2017proof}] \label{lem-sparse-large-ball}
Let $0 < \eps_1 < 1$, $0 < \eps_2 < 1/20$ and $s \ge 1$.
There exists $d_0 = d_0(\eps_1, \eps_2, s)$ such that the following is true for each $d \ge d_0$.
Suppose that $H$ is an $(\eps_1, \eps_2 d)$-expander with $n$ vertices and let $k = \log n/100s \log \log n$.
If $Y, W \subseteq V(G)$ are disjoint sets with $|Y| \ge d^2 \log^{10} n$ and $|W| \le d^2 \log^7 n$,
then $|B^k_{G-W}(Y)| \ge \exp((\log n)^{1/4})$.
\end{lemma}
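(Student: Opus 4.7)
The plan is to iteratively expand $Y$ inside $H - W$, showing that after $k$ rounds the resulting ball has size at least $\exp((\log n)^{1/4})$. Define $Y_0 = Y$ and $Y_{t+1} = Y_t \cup N_{H-W}(Y_t)$, so that $Y_t = B^t_{H-W}(Y)$. The goal is to maintain the invariant $|Y_t| \ge d^2 \log^{10} n$ throughout and to show that either $|Y_t|$ grows multiplicatively by a factor of $1 + \Omega(1/\log^2 n)$ at each step, or $|Y_t|$ already exceeds $n/2 \gg \exp((\log n)^{1/4})$ (in which case we are done).

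For the per-step growth, suppose $d^2 \log^{10} n \le |Y_t| \le n/2$. The $(\eps_1, \eps_2 d)$-expander property of $H$ gives
\[
|N_H(Y_t)| \ge \frac{\eps_1 \cdot |Y_t|}{\log^2(15 |Y_t|/(\eps_2 d))} \ge \frac{\eps_1 |Y_t|}{\log^2 n}.
\]
Since $|Y_t| \ge d^2 \log^{10} n$, the right-hand side is at least $\eps_1 d^2 \log^8 n$, which for $d \ge d_0$ is at least $2|W|$. Hence
\[
|Y_{t+1}| \ge |Y_t| + |N_H(Y_t)| - |W| \ge |Y_t|\left(1 + \frac{\eps_1}{2 \log^2(15|Y_t|/(\eps_2 d))}\right),
\]
which in particular preserves the lower-bound invariant $|Y_{t+1}| \ge d^2 \log^{10} n$.

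To finish, I iterate. Set $f(t) = \log |Y_t|$ and $g(t) = \log(15|Y_t|/(\eps_2 d))$; using $\log(1+x) \ge x/2$ for $x \in [0,1]$ the recursion becomes $f(t+1) - f(t) \ge \eps_1/(4 g(t)^2)$. Suppose for contradiction that $|Y_k| < \exp((\log n)^{1/4})$; then $g(t) \le 2(\log n)^{1/4}$ for every $t \le k$ (for $n$ large), and telescoping gives
\[
f(k) \ge f(0) + \frac{\eps_1 k}{16 (\log n)^{1/2}} \ge \frac{\eps_1 \cdot (\log n)^{1/2}}{1600\, s\, \log\log n},
\]
using $k = \log n/(100 s \log\log n)$. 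Since $\log\log n = o((\log n)^{1/4})$, the right-hand side exceeds $(\log n)^{1/4}$ for $n$ large enough, contradicting $f(k) < (\log n)^{1/4}$.

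The main obstacle is the quantitative balance between the sublinear expansion rate $\eps(x) \asymp 1/\log^2 x$ and the number of available iterations $k = \Theta(\log n/\log\log n)$: the expansion is so slow that, via the heuristic $f(t+1) - f(t) \gtrsim 1/f(t)^2$, one needs $k \gtrsim \log^{3/4} n$ iterations to drive $f(t)$ up to $(\log n)^{1/4}$, and the choice of $k$ meets this bound comfortably thanks to the $1/\log\log n$ factor. The secondary concern, that removing $W$ could erode the expansion, is handled by the $\log^3 n$ slack between $|Y|$ and $|W|$ in the hypothesis, which propagates (and only improves) as $|Y_t|$ grows.
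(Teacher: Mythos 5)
Your argument is correct and is essentially the standard iterated sublinear-expansion computation that the paper defers to (Proposition 5.6 of \cite{liu2017proof}): apply the expander property to $B^t_{H-W}(Y)$, absorb $|W|$ using the $\log^3 n$ slack, and telescope the logarithm of the ball sizes over the $k$ available rounds. The quantitative bookkeeping (the bound $g(t)\le 2(\log n)^{1/4}$ under the contradiction hypothesis, the per-step gain $\eps_1/(16(\log n)^{1/2})$, and the comparison of $k$ with $(\log n)^{3/4}$) all checks out, so no further changes are needed.
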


Note that the exponents of $\log n$ in the above two lemmas are altered to adapt to our proof.
The proofs are similar to the original proofs so we omit here.

\section{Constructing balanced clique subdivisions in dense graphs}\label{sec:dense}

In this section, we deal with the case when graph is dense, that is, $d \ge \log^{s} n$ for some large $s$, and prove the following.

\begin{lemma} \label{lem:dense}
There exists $0 < \eps_1 < 1$ such that for any $0 < \varepsilon_2 < 1$ and $s \ge 20$,
there exists $d_0 = d_0(\varepsilon_1, \varepsilon_2, s)$ such that the following holds for each $n \ge d \ge d_0$ and $d \ge \log^{s} n$.
Suppose that $G$ is an $n$-vertex bipartite $(\varepsilon_1, \varepsilon_2 d)$-expander with $\delta(G) \ge d$.
Then $G$ contains a $\tk^{(\ell)}_{\sqrt{d}/2\log^{10} n}$ for some $\ell \in \mathbb{N}$.
\end{lemma}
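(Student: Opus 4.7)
The plan is to construct the $\tk^{(\ell)}_k$ iteratively via the adjuster-based path-routing machinery of Section~\ref{sec:pre}. First, reduce to the $\tk^{(2)}_{d/2}$-free case: if $G$ contains a $\tk^{(2)}_{d/2}$, then since $d/2 \ge \sqrt{d}/(2\log^{10}n)$, this $2$-balanced subdivision of $K_{d/2}$ already witnesses the conclusion with $\ell=2$. So henceforth assume $G$ is $\tk^{(2)}_{d/2}$-free, which is precisely the hypothesis needed to invoke Lemmas~\ref{lem:large-adjuster} and~\ref{lem-finalconnect-sparse}.

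Fix parameters $m=\frac{800}{\eps_1}\log^3 n$, $D=\log^K n$ for a constant $K=K(s)$ chosen large later, and an even target length $\ell\in[\log^7 n,\, n/\log^{10}n]$. By iterated application of Lemma~\ref{lem:large-adjuster} (discarding unsuitable outputs until parity works out), extract $k=\sqrt{d}/(2\log^{10}n)$ pairwise disjoint $(D,m,r)$-adjusters $\cA_1,\dots,\cA_k$ whose primary endpoints $v_1,\dots,v_k$ all lie in one common part of the bipartition, so that $\pi(v_i,v_j,G)\in\{0,2\}$ is compatible with a uniform even $\ell$. Then process the $\binom{k}{2}$ pairs one at a time: when handling $(i,j)$, take $U$ to be the interiors of paths built so far together with all $v_q$ and $\cA_q$ for $q\notin\{i,j\}$; extract from $\cA_i,\cA_j$ the required $(D,m)$-expansions of $v_i,v_j$ by trimming via Proposition~\ref{prop-trimming} (and, if previous consumption has depleted them, regenerating via Lemma~\ref{lem:large-adjuster} in $G-U$); invoke Lemma~\ref{lem-finalconnect-sparse} with these expansions to produce a $v_i,v_j$-path $P_{ij}$ of length exactly $\ell$ in $G-U$; and update $U$ by adding $V(P_{ij})\setminus\{v_i,v_j\}$. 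After all $\binom{k}{2}$ pairs, the branch vertices together with the paths form the desired $\tk^{(\ell)}_k$.

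The main obstacle is the parameter balancing, since both invoked lemmas cap $|U|$. The cumulative path contribution to $|U|$ is $\binom{k}{2}(\ell-1)\le d\ell/(8\log^{20}n)$, which for $\ell=\log^7 n$ is $d/(8\log^{13}n)$. For Lemma~\ref{lem-finalconnect-sparse}, the condition $|U|\le D/(2\log^3 n)$ is satisfied if $K=K(s)$ is chosen so that $\log^{K-3}n\ge d/(4\log^{13}n)$, which is feasible because the threshold $d_0$ in that lemma is allowed to depend on $K$. For Lemma~\ref{lem:large-adjuster}, the ceiling $|U|\le\exp((\log\log n)^{100})$ is tighter, and respecting it when $d$ is large relative to $\exp((\log\log n)^{100})\log^{13}n$ will force a preliminary reduction to a denser, smaller substructure—for example, by locating a moderately-sized ball in $G$ on which Corollary~\ref{cor-expander} re-produces a bipartite expander with the same density, so that the effective $n$ is reduced enough to bring $d/\log^{13}n$ below the ceiling. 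The detailed verification of these parameter relationships, together with the bookkeeping needed to keep the adjusters $\cA_i$ robust under iterative consumption, is the technical core of the proof.
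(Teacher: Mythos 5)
There is a genuine gap, and it lies precisely in the place you flag at the end: the parameter ceilings of the sparse-case machinery. Lemma~\ref{lem:large-adjuster} requires $D,|U|\le\exp((\log\log n)^{100})$ and Lemma~\ref{lem-finalconnect-sparse} requires $D\le\log^k n$ for a \emph{constant} $k$ and $|U|\le D/(2\log^3 n)$. In the dense regime $d$ ranges over $[\log^s n,\, n]$, so the cumulative removed set has size $|U|=\Theta\bigl(\binom{k}{2}\ell\bigr)=\Theta(d/\log^{13}n)$, which for $d$ close to $n$ is $\Theta(n/\log^{13}n)$ --- vastly larger than either cap, and no constant $K(s)$ makes $\log^{K-3}n$ absorb it. You acknowledge this and gesture at a ``preliminary reduction to a denser, smaller substructure,'' but this is exactly the missing key step, and it does not come for free: Corollary~\ref{cor-expander} gives no upper bound on $|V(H)|$, taking a ball in an expander does not shrink $n$ in a controlled way while keeping minimum degree $d$, and any attempt to pass to a subgraph on $\mathrm{poly}(d)$ vertices lands you back in either the dense case (circular) or the sparse case (not what is being proved here).

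The paper avoids this entirely by \emph{not} reusing the sparse-case adjuster lemmas. Instead, Section~\ref{sec:dense} builds dense-specific versions in which all quantities scale with $d$ rather than polylogarithmically in $n$: Lemma~\ref{lem-simple-adjuster} exploits the fact that a bipartite graph of minimum degree $d$ has girth $O(\log n/\log d)$ to produce a $(D,\log n,1)$-adjuster with $D\le d/3$ directly from two antipodal vertices of a shortest cycle and their neighbourhoods; Lemma~\ref{lem-simple-adjuster-robust} makes this robust against removing up to $d/10$ vertices by reapplying Corollary~\ref{cor-expander}; Lemma~\ref{lem-adj-to-adj-path} chains these into a $(d/100,m,r)$-adjuster; and Lemma~\ref{lem-finalconnect-dense} connects two $(D,m)$-expansions with $D=d/\log^{10}n$ while tolerating $|U|\le d/(2\log^{13}n)$, which is exactly what the greedy construction of the $\binom{t}{2}$ paths needs. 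Note also that this dense machinery needs no $\tk^{(2)}_{d/2}$-free hypothesis, so your opening reduction, while correct, is unnecessary in the paper's route. In short, your outer greedy framework (fix $t$ core vertices, connect pairs by paths of uniform length $\ell$ avoiding everything built so far) matches the paper, but the core technical engine must be replaced: without adjusters whose size scales with $d$, the dense case cannot be closed.
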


Note that when such graph $G$ is dense, it contains a balanced clique subdivision of size at least $\Omega(d^{\frac{1}{2}-\frac{10}{s}})$ for arbitrarily large $s$.
%Moreover, we do not require $G$ to be $\tk^{(2)}_{\Omega(d)}$-free here.

We adopt the idea in \cite{liu2020proof} that uses adjuster structure to alter the length of a path, and refine the analysis when graph is dense. 
%The following lemma is analogue of Lemma 4.2 in \cite{liu2020proof} that finds a simple adjuster when graph is dense.
In any dense expander graph with minimum degree $d$, we can find a simple adjuster with expansion size linear to $d$.

\begin{lemma}\label{lem-simple-adjuster}
	For any $\eps_1>0$, $\eps_2>0$ and $s \ge 20$, there exists $d_0=d_0(\eps_1,\eps_2, s)$ such that the following is true for each $n\geq d\geq d_0$ and $d \ge \frac{1}{10} \log^s n$. Suppose that $G$ is an $n$-vertex bipartite $(\eps_1,\eps_2 d)$-expander with $\de(G)\ge d$.
  Let $D\leq d/3$.

  Then, $G$ contains a $(D,\log n,1)$-adjuster.
\end{lemma}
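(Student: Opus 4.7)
The plan is to exploit the density assumption $d \ge \frac{1}{10}\log^s n$ (with $s \ge 20$) to find a very short cycle, and then to build the adjuster essentially by hand out of the cycle together with two small stars centred at two well-chosen points of the cycle. In particular, we will not need the expander hypothesis at all beyond what is implied by $\delta(G) \ge d$; the assumption is kept so the statement composes with the other lemmas of the section.

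First I would apply a Moore-type bound. Since $\delta(G) \ge d$, any BFS tree from a vertex has at least $(d-1)^{g/2-1}$ vertices before the girth $g$ forces a collision. Hence $g \le 2\log_{d-1} n + 2$, and using $d \ge \frac{1}{10}\log^s n$ with $s \ge 20$ this gives, for $n \ge n_0(s)$, a bound of the form $g \le \log n / \log\log n$, which is certainly $\le 10\log n - 2 = 10m - 2$. Pick a shortest cycle $C = u_0 u_1 \cdots u_{2k-1} u_0$ of length $2k \le g$; note $G$ bipartite forces $2k$ to be even, which is automatic.

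Now set $v_1 := u_0$ and $v_2 := u_{k-1}$, and let $A := V(C) \setminus \{v_1, v_2\}$. The two arcs of $C$ between $v_1$ and $v_2$ give two $v_1,v_2$-paths inside $G[A \cup \{v_1,v_2\}]$ of lengths $k-1$ and $k+1$, so condition \ref{d-a-4} of Definition \ref{defn-adj} holds with $\ell = k-1$ and the single choice $i \in \{0,1\}$, while $|A| = 2k-2 \le g-2 \le 10m$ gives \ref{d-a-3}. Next, using $\delta(G) \ge d \ge 3D$ together with $|V(C)| \le 2k \le \log n$, I would choose greedily a set $X_1 \subseteq N_G(v_1) \setminus V(C)$ with $|X_1| = D-1$, and then a set $X_2 \subseteq N_G(v_2) \setminus (V(C) \cup X_1 \cup \{v_1\})$ with $|X_2| = D-1$. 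The first choice is possible because $|N_G(v_1) \setminus V(C)| \ge d - 2k \ge d - \log n \ge D-1$, and the second because $|N_G(v_2) \setminus (V(C) \cup X_1 \cup \{v_1\})| \ge d - 2k - (D-1) - 1 \ge 2d/3 - \log n \ge D - 1$.

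Finally I would set $F_i := G[\{v_i\} \cup X_i]$ for $i \in \{1,2\}$; by construction $|V(F_i)| = D$ and every vertex of $F_i$ is at distance at most $1 \le \log n = m$ from $v_i$ in $F_i$, so \ref{d-a-2} holds. Since $X_1 \cap V(C) = \emptyset$, $X_2 \cap V(C) = \emptyset$, $v_1 \notin V(F_2)$ (excluded when choosing $X_2$) and $v_2 \notin V(F_1)$ (because $v_2 \in V(C)$ while $X_1 \subseteq V(G) \setminus V(C)$), the three sets $V(F_1), V(F_2), A$ are pairwise disjoint, giving \ref{d-a-1}. Thus $\cA = (v_1, F_1, v_2, F_2, A)$ is the desired $(D, \log n, 1)$-adjuster. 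The whole argument is elementary and the only subtlety, which is really just a bookkeeping step, is to check that the girth bound leaves room to place $X_1, X_2$ disjointly from $V(C)$ and from each other; this is exactly where $D \le d/3$ and $d \ge \frac{1}{10}\log^s n$ are used.
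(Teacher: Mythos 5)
Your proof is correct and follows essentially the same route as the paper's: take a shortest cycle (short by the Moore bound since $\delta(G)\ge d$ is large), place $v_1,v_2$ at distance $\ell_0-1$ on it so the two arcs give paths of lengths differing by $2$, and greedily carve the two $(D,\cdot)$-expansions out of $N(v_1)$ and $N(v_2)$ avoiding the cycle and each other, using $D\le d/3$ and $d\ge\frac{1}{10}\log^s n$ for the counting. No further comment is needed.
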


\begin{proof}
Let $C$ be a shortest cycle in $G$. 
Since $G$ is bipartite, $C$ must have even length, say $2\ell_0$.
Since $\delta(G) \ge d$, we have $\ell_0 \le \log n / \log (d-1) < \log n$.
Let $x_1, x_2$ be two vertices of distance $\ell_0 - 1$ on $C$. 

Choose $F_i \subseteq N(x_i)$ such that $|F_i| = D-1$ and $F_i$ is disjoint from $F_{3-i} \cup V(C) \cup \{x_1, x_2\}$ for $i \in [2]$.
This is possible because we may first choose $F_1 \subseteq N(x_1) \setminus (V(C) \cup \{x_2\})$ and then choose $F_2 \subseteq N(x_2) \setminus (F_1 \cup V(C) \cup \{x_1\})$, noting that $|N(x_2)| - |F_1| - |V(C)| > d - d/3 -\log n > d/3$.
%Let $F_i$ be the neighbourhood of $x_i$ of size $d/3-1$ such that $F_i$ is disjoint from $F_{3-i} \cup C \cup \{x_1, x_2\}$  for $i \in [2]$.
%Such $F_1$ and $F_2$ exist because $|F_1 \cup F_2| < 2d/3 < \delta(G) - |C| - 2$ as $d \ge d_0(\eps_1, \eps_2, s)$.
Thus $F_i \cup \{x_i\}$ is $(D,2)$-expansion of $x_i$ for $i \in [2]$.
Let $A = C \backslash \{x_1,x_2\}$, so $|A| < |C| < 2 \log n$. 
Therefore, $(x_1,F_1 \cup \{x_1\},x_2,F_2 \cup \{x_2\},A)$ is a desired $(D,\log n,1)$-adjuster.
\end{proof}

Moreover, such a simple adjuster exists robustly in dense expander graph upon removal of any subset of vertices of moderate size.

\begin{lemma}\label{lem-simple-adjuster-robust}
	There exists some $\eps_1>0$ such that for every $\eps_2>0$ and $s \ge 20$, there exists $d_0=d_0(\eps_1,\eps_2, s)$ such that the following is true for each $n\geq d\geq d_0$ and $d \ge \log^s n$. Suppose that $G$ is an $n$-vertex bipartite $(\eps_1,\eps_2 d)$-expander with $\de(G)\ge d$.
Let $D \le d/30$ and $U \subseteq V(G)$ such that $|U| \le d/10$.

Then, $G - U$ contains a $(D,\log n,1)$-adjuster.
\end{lemma}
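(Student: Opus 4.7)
The plan is to copy the argument of Lemma~\ref{lem-simple-adjuster} almost verbatim but work inside $G' := G - U$ rather than $G$. Removing $U$ is mild because $|U| \le d/10$, so $\delta(G') \ge d - |U| \ge 9d/10$ and $G'$ remains bipartite. Crucially, the earlier construction uses only a minimum-degree hypothesis and bipartiteness, not the expander property of the ambient graph, so the same recipe applies inside $G'$.

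First I would find a short cycle $C \subseteq G'$ via a standard Moore-bound BFS: a bipartite graph with minimum degree $9d/10$ has a shortest cycle of even length $2\ell_0$ with $\ell_0 \le \log n/\log(9d/10-1) + O(1)$. Because $d \ge \log^{s}n$ with $s \ge 20$, this yields $|C| < 2\log n$ once $d \ge d_0$. Then choose $x_1, x_2$ on $C$ at distance $\ell_0 - 1$ along $C$.

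Next I would choose $F_i \subseteq N_{G'}(x_i) \setminus (F_{3-i} \cup V(C) \cup \{x_{3-i}\})$ with $|F_i| = D - 1$, for $i \in [2]$ sequentially. The budget check is the only computation and is routine: each $x_i$ has at least $9d/10$ neighbors in $G'$, while we must avoid at most $(D-1) + |V(C)| + 1 \le d/30 + 2\log n + 1$ of them, which is dominated by $9d/10$ once $d \ge d_0$. Hence each $F_i \cup \{x_i\}$ is a $(D,2)$-expansion, and therefore a $(D, \log n)$-expansion, of $x_i$ in $G'$.

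Finally, setting $A := V(C) \setminus \{x_1, x_2\}$, the two arcs of $C$ provide internally disjoint $x_1 x_2$-paths in $G'[A \cup \{x_1, x_2\}]$ of lengths $\ell_0 - 1$ and $\ell_0 + 1$, verifying \ref{d-a-4} with $\ell = \ell_0 - 1$ and $k = 1$; conditions \ref{d-a-1}--\ref{d-a-3} follow from the disjoint selection of $F_1, F_2$ and from $|A| \le |C| < 2\log n \le 10 \log n \cdot 1$. Thus $(x_1, F_1 \cup \{x_1\}, x_2, F_2 \cup \{x_2\}, A)$ is the desired $(D, \log n, 1)$-adjuster in $G - U$. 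There is no real obstacle beyond checking that the bounds $|U| \le d/10$, $D \le d/30$, and $d \ge \log^{s}n$ jointly leave enough room to make the selection of $F_1, F_2$ after removing $U, V(C)$; this is automatic for $d_0$ large.
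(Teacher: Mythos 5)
Your proposal is correct, but it handles the removal of $U$ differently from the paper. The paper's proof is a short reduction: it notes that $d(G-U)\ge d-2|U|\ge 4d/5$, applies Corollary~\ref{cor-expander} to extract a fresh bipartite $(\eps_1,\eps_2 d/10)$-expander subgraph $H\subseteq G-U$ with $\delta(H)\ge d/10$, and then invokes Lemma~\ref{lem-simple-adjuster} on $H$ as a black box (which is also where the threshold $D\le d/30=(d/10)/3$ in the statement comes from). You instead observe that the construction in Lemma~\ref{lem-simple-adjuster} never actually uses the expansion property --- only bipartiteness and minimum degree --- and that both survive vertex deletion, since $\delta(G-U)\ge d-|U|\ge 9d/10$; you then rerun the short-cycle-plus-neighbourhoods construction directly inside $G-U$. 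Both arguments are sound. Your route is more elementary and self-contained (no appeal to Corollary~\ref{cor-expander}) and in fact leaves much more room, since it would tolerate $D$ up to roughly $3d/10$ rather than $d/30$; the paper's route has the advantage of reusing Lemma~\ref{lem-simple-adjuster} verbatim without re-inspecting its proof, and of producing a genuine expander subgraph (which matters in later lemmas of the paper, though not here). All of your verifications --- the Moore-bound girth estimate $2\ell_0<2\log n$ using $d\ge\log^s n$, the disjoint selection of $F_1,F_2$, and the check of conditions \ref{d-a-1}--\ref{d-a-4} with the two arcs of $C$ of lengths $\ell_0-1$ and $\ell_0+1$ --- go through.
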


\begin{proof}
Let $\eps_1 > 0$ be such that Corollary \ref{cor-expander} holds.
Note that 
%$\delta(G - U) \ge \delta(G) - |U| \ge 9d/10$ and 
$d(G - U) = 2|E(G-U)| / |V(G-U)| \ge (dn - 2n|U|)/n = d-2|U| \ge 4d/5$.
 
By Corollary \ref{cor-expander}, $G - U$ has a bipartite $(\varepsilon_1, \varepsilon_2 d/10)$-expander subgraph $H$ with $\delta(H) \ge d/10$.
Also note that $d(H) \ge \delta(H) \ge d/10 \ge \frac{1}{10} \log^s n \ge \frac{1}{10} \log^s |V(H)|$.
Apply Lemma \ref{lem-simple-adjuster} with $G_{\ref{lem-simple-adjuster}} = H$, we obtain a $(D,\log n,1)$-adjuster in $H$, and thus in $G-U$.
\end{proof}

We chain simple adjuster together into a larger adjuster. 

\begin{lemma}\label{lem-adj-to-adj-path}
	There exists some $0<\eps_1<1$ such that for every $0<\eps_2<1$ and $s \ge 20$, there exists $d_0=d_0(\eps_1,\eps_2, s)$ 
such that the following is true for each $n\geq d\geq d_0$ and $d \ge \log^s n$. Suppose that $G$ is an $n$-vertex bipartite $(\eps_1,\eps_2 d)$-expander with $\de(G)\ge d$.
Let $c = 1/100$, $m = \frac{800}{\eps_1} \log^3 n$, $D = cd$, $1 \le r \le 20m$ and $U \subseteq V(G)$ such that $|U| \le cd/10$.

Then, $G - U$ contains a $(D,m,r)$-adjuster.
\end{lemma}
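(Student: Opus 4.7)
The plan is to construct the $(D,m,r)$-adjuster by iterative extension. First obtain a $(D,m,1)$-adjuster in $G-U$ from Lemma~\ref{lem-simple-adjuster-robust}, and then for each $i = 1, \ldots, r-1$ extend a current $(D,m,i)$-adjuster $\cA_i = (v_1,F_1,v_2,F_2,A_i)$ by chaining in a fresh simple adjuster $\cA' = (v_1',F_1',v_2',F_2',A')$ via a short connector $P$ from $F_2$ to $F_1'$, producing $\cA_{i+1} = (v_1, F_1, v_2', F_2', A_i \cup \{v_2\} \cup \mathrm{int}(P) \cup A' \cup \{v_1'\})$. The key feature is that combining any of the $i+1$ available $v_1$-$v_2$ path lengths from $\cA_i$ with either of the $2$ available $v_1'$-$v_2'$ path lengths from $\cA'$, across the fixed connector, yields $i+2$ distinct path lengths between $v_1$ and $v_2'$ that form an arithmetic progression with common difference $2$, so that A4 is maintained.

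At each iteration, the fresh adjuster $\cA'$ is produced by applying Lemma~\ref{lem-simple-adjuster-robust} with avoidance set $U \cup V(\cA_i)$. The required bound $|U \cup V(\cA_i)| \le d/10$ will be verified from $|V(\cA_i)| \le 2D + 10mi = 2cd + O(m^2)$ and the hypothesis $d \ge \log^{20} n$, which comfortably dominates $m^2 = O(\log^6 n)$ for $n$ sufficiently large. Moreover, the short-cycle construction in Lemma~\ref{lem-simple-adjuster} yields $|A'| < 2\log n$, much smaller than the definitional bound $10m$, a fact that will be essential for the final bookkeeping of $|A_r|$.

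The main obstacle is producing the connector $P$ of length $O(m)$ from $F_2$ to $F_1'$ inside $G - W$, where $W := U \cup (V(\cA_i)\setminus V(F_2)) \cup (V(\cA')\setminus V(F_1'))$ has $|W| = \Theta(cd) = \Theta(D)$. Direct application of Lemma~\ref{new-connect} with $A = F_2,\ B = F_1'$ fails because the precondition $|W|\log^3 n \le 10D$ is violated by a factor of $\log^3 n$. The plan to resolve this is to exploit the density hypothesis by inflating the endpoint sets. Since $|W| = O(cd) \ll d$, the graph $G - W$ retains minimum degree $(1-O(c))d$ and sufficient expansion on moderately large sets; I will grow $F_2$ via BFS from $v_2$ in $G - W$ into a set $F_2^\sharp \supseteq F_2$ of size at least $D\log^4 n$ (carrying the BFS out for $O(\log^3 n) \le m$ steps, using the expander property on the larger scales to absorb the removal of $W$), and similarly grow $F_1'^\sharp$ disjointly from $F_2^\sharp$, terminating early with an immediate short path if the two BFS fronts collide. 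Applying Lemma~\ref{new-connect} with $A = F_2^\sharp,\ B = F_1'^\sharp$ and $x = D\log^4 n$ then slackens the precondition to $|W| \le 10D\log n$, which is comfortably satisfied, and yields a path of length $\le m/20$ in $G - W$ that together with the BFS trees in $F_2^\sharp$ and $F_1'^\sharp$ provides the desired connector $P$ from $v_2$ to $v_1'$.

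Finally, the adjuster conditions A1--A4 for the output $\cA_r$ will be verified. Conditions A1 and A2 are direct: disjointness follows from the iterative construction, while $F_1$ and $F_2'$ remain $(D,m)$-expansions of $v_1$ and $v_2'$ as supplied by Lemma~\ref{lem-simple-adjuster-robust}. Condition A4 follows from the arithmetic-progression gluing described above. For A3, each extension contributes at most $\ell(P) + |A'| + 2 \le O(m) + 2\log n + 2 \le m/3$ new vertices to $A$, so that $|A_r| \le 10m + (r-1)\, m/3 \le 10mr$ as required, and the iteration terminates with the desired $(D,m,r)$-adjuster in $G - U$.
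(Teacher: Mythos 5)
Your overall architecture --- induction on $r$, producing each fresh simple adjuster from Lemma~\ref{lem-simple-adjuster-robust} applied with avoidance set $U\cup V(\cA_i)$ of size $O(cd)$, and gluing the two arithmetic progressions of path lengths across a fixed connector to verify A4 --- is exactly the paper's, and your bookkeeping for A3 is essentially sound. The divergence, and the gap, is in the connection step. By pre-committing the connector to run from $F_2$ to $F_1'$, you force the avoidance set $W$ to contain the two surviving end-expansions $F_1$ and $F_2'$, so $|W|=\Theta(cd)$, and your repair is to grow $F_2$ and $F_1'$ by BFS inside $G-W$ up to size $D\log^4 n$. That growth is not justified. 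For a set $X$ with $|X|=\Theta(d)$ the sublinear expansion gives only $|N_G(X)|\ge \eps_1|X|/\log^2(15|X|/\eps_2 d)$, whose ratio $\eps_1/\log^2(O(1))$ is an uncontrolled constant relative to $c=1/100$: if $\eps_1$ is small and $\eps_2$ is close to $1$, then $|N_G(X)|<|W|$ for every $|X|$ in a range $[\eps_2 d/2,\,Kd]$ with $K$ large, and $B^j_{G-W}(F_2)$ can stall permanently at size $O(d)$, never reaching $D\log^4 n$. The appeal to ``expansion on the larger scales'' is circular, since reaching those scales requires passing through the stalling scale. (Choosing $c$ as a sufficiently small function of $\eps_1,\eps_2$ would rescue the inflation argument, but $c$ is fixed at $1/100$ in the statement.)

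The paper sidesteps this by not pre-committing: it applies Lemma~\ref{new-connect} with $A=V(F_1)\cup V(F_2)$ and $B=V(F_3)\cup V(F_4)$, each of size $2D$, and takes the avoidance set to be only $A_1\cup A_2$, of size $O(mr)=O(\log^6 n)$, so the precondition $|W|\log^3 n\le 10x$ holds with huge slack. Whichever two expansions the short path $P$ happens to enter (say $F_1$ and $F_3$, after relabelling) are then absorbed, together with $P$, into the centre set of the new adjuster via a $v_1,v_3$-path $Q\subseteq F_1\cup P\cup F_3$ of length $O(m)$, and the two untouched expansions $F_2,F_4$ become the new ends. Restructuring your connection step this way eliminates the need for any inflation. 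One caveat you have actually spotted and the paper glosses over: the connector should also avoid $U$ so that the new adjuster lies in $G-U$, and with $|U|\le cd/10$ this again strains the precondition of Lemma~\ref{new-connect}; in the one place the lemma is used (Lemma~\ref{lem-finalconnect-dense}) one has $|U|\le 3d/\log^{10}n$, for which adding $U$ to the avoidance set is harmless.
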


\begin{proof}
Let $\eps_1 > 0$ be such that Lemma \ref{lem-simple-adjuster-robust} holds and $d_0 = d(\eps_1, \eps_2, s)$ be large. 
We prove the property by induction on $r$.
When $r = 1$, Lemma \ref{lem-simple-adjuster-robust} gives the desired $(D,\log n,1)$-adjuster.

Now assume that for some $1 \le r < 20m$, $G - U$ contains a $(D,m,r)$-adjuster, say $\cA_1:=(v_1, F_1, v_2, F_2, A_1)$.
We aim to show the existence of a $(D,m, r+1)$-adjuster.
Let $U' = U \cup F_1 \cup F_2 \cup A_1$.
So $|U'| = |U| + |F_1| + |F_2| + |A_1| \le cd/10 + cd + cd + 10mr < 3cd$.
Applying Lemma \ref{lem-simple-adjuster-robust} with $(G,U)_{\ref{lem-simple-adjuster-robust}} = (G,U')$, we have that $G - U'$ contains a $(D,\log n,1)$-adjuster, say $\cA_2:=(v_3, F_3, v_4, F_4, A_2)$.
Since $|F_1 \cup F_2| = |F_3 \cup F_4| = 2D$ and $|A_1 \cup A_2| < 20 mr  <  \log^7 n < 2D / (10\log^3 n)$,
there exists a path $P$ of length at most $m$ from $V(F_1) \cup V(F_2)$ to $V(F_3) \cup V(F_4)$ avoiding $A_1 \cup A_2$ by Lemma \ref{new-connect}.

Without loss of generality, we may assume that $P$ is from $V(F_1)$ to $V(F_3)$. 
By definition of $F_1$ and $F_3$, there exists a path $Q$ from $v_1$ to $v_3$ in $F_1 \cup P \cup F_3$ of length at most $2\log n + 3m$.
We claim that $\cA_3:=(v_2, F_2, v_4, F_4, A_1 \cup A_2 \cup V(Q))$ is a desired $(D,m,r+1)$-adjuster.
In fact, by construction $F_2$, $F_4$ and $A_1 \cup A_2 \cup V(Q)$ are pairwise disjoint, and $F_i$ is a $(D, m)$-expansion of $v_i$ for $i \in \{2,4\}$.
Note that $|A_1 \cup A_2 \cup V(Q)| \le |A_1| + |A_2| + |V(Q)| \le 10mr + 10 \log n + (2\log n + 3m) \le 10m(r+1)$.
Finally, let $\ell:=\ell(\cA_1)+\ell(\cA_2)+|V(Q)|$.
We show that for each $i \in \{0,1,\cdots, r+1\}$, there is a $v_2,v_4$-path in $G[A_1 \cup A_2 \cup V(Q) \cup \{v_2, v_4\}]$ with length $\ell+2i$.
If $i \in \{0,1,\cdots,r\}$, then let $i_1=i$ and $i_2=0$; otherwise, let $i_1=r$ and $i_2=1$.
Let $P_1$ be a $v_2,v_1$-path of length $\ell(\cA_1) + 2i_1$ in $G[A_1 \cup \{v_1,v_2\}]$ and $P_2$ be a $v_3,v_4$-path of length $\ell(\cA_2) + 2i_2$ in $G[A_2 \cup \{v_3,v_4\}]$.
Therefore, $P_1 \cup Q \cup P_2$ is a desired $v_2,v_4$-path in $G[A_1 \cup A_2 \cup V(Q) \cup \{v_2, v_4\}]$ with length $\ell+2i$.
\end{proof}

In the follow lemma, we show that there exists a path of certain length connecting two given vertices. 

\begin{lemma}\label{lem-finalconnect-dense}
There exists some $0 < \eps_1 < 1$ such that, for any $0<\eps_2<1$ and $s \ge 20$, there exists $d_0=d_0(\eps_1,\eps_2, s)$ such that the following holds for each $n\geq d\geq d_0$ and $d \ge \log^s n$. Suppose that $G$ is an $n$-vertex bipartite $(\eps_1,\eps_2 d)$-expander with $\de(G)\ge d$.

Suppose $D=d/\log^{10} n$, and $U\subseteq V(G)$ with $|U|\leq D/2\log^3n$, and let $m=\frac{800}{\eps_1}\log^3n$. Suppose $F_1,F_2\subseteq G-U$ are vertex disjoint subgraphs such that $F_i$ is a $(D,m)$-expansion of $v_i$, for each $i\in[2]$. Let $\log^{7}n\leq \ell\leq n/\log^{10}n$ be such that $\ell=\pi(v_1,v_2,G)\mod 2$.

Then, there is a $v_1,v_2$-path with length $\ell$ in $G-U$.
\end{lemma}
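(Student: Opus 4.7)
The plan is to follow the proof strategy of Lemma \ref{lem-finalconnect-sparse}, replacing the adjuster-building Lemma \ref{lem:large-adjuster} with its dense counterpart Lemma \ref{lem-adj-to-adj-path}, which tolerates a much larger forbidden set. The core idea is to construct a long adjuster disjoint from $F_1\cup F_2\cup U$, use Lemma \ref{longconnect4} to link $\{v_1,v_2\}$ to the two endpoints of the adjuster via two vertex-disjoint paths whose total length is approximately correct, and then exploit the adjuster's ability to tune by $\pm 2$ to land at exactly length $\ell$.

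First, I would invoke Lemma \ref{lem-adj-to-adj-path} with forbidden set $U \cup V(F_1) \cup V(F_2)$ (whose size is at most $|U| + 2D \le 3d/\log^{10} n \ll cd/10$ for $s \ge 20$ and $n$ large) and $r = 20m$, producing a $(cd, m, 20m)$-adjuster $\cA = (w_1, F_3', w_2, F_4', A)$ in $G - U - V(F_1) - V(F_2)$. By Proposition \ref{prop-trimming} I trim $F_3', F_4'$ to $(D, m)$-expansions $F_3, F_4$ of $w_1, w_2$ so that all four expansions share the common size $D$. Note $|A| \le 10mr = O(\log^6 n)$, which is bounded by $D/\log^3 n = d/\log^{13} n$ for $s \ge 20$, and $\ell(\cA) \le |A| + 1 = O(\log^6 n)$.

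Second, I would apply Lemma \ref{longconnect4} to $F_1, F_2, F_3, F_4$ in $G - A$ with target length $\ell^\star := \ell - \ell(\cA) - 40m$. The required hypotheses ($\log^{10} n \le D \le n/\log^{10} n$, $(100/\eps_1)\log^3 n \le m \le \log^4 n$, $|A| \le D/\log^3 n$, and $0 < \ell^\star \le n/\log^{10} n$) all hold in our regime; in particular $\ell^\star \ge \log^7 n - O(\log^6 n) > 0$. This yields vertex-disjoint paths $P, Q$ in $G - A - U$ that together realise a matching between $\{v_1, v_2\}$ and $\{w_1, w_2\}$, with $\ell^\star \le \ell(P) + \ell(Q) \le \ell^\star + 20m$.

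Finally, set $2i := \ell - \ell(\cA) - \ell(P) - \ell(Q)$, which lies in $[20m, 40m]$. Bipartiteness forces $2i$ to be even: the parity of any $v_1,v_2$-walk in $G$ equals $\pi(v_1,v_2,G) \bmod 2 = \ell \bmod 2$ by hypothesis, and the walk obtained by concatenating $P$, a $w_1,w_2$-path through the adjuster of length $\ell(\cA)+2i$, and $Q$ has total length $\ell(P)+\ell(Q)+\ell(\cA)+2i$, so the two parities match. The adjuster supplies the required $w_1,w_2$-path of length exactly $\ell(\cA)+2i$ inside $G[A \cup \{w_1,w_2\}]$; concatenating it with $P$ and $Q$ (in whichever orientation matches the pairing Lemma \ref{longconnect4} delivers) produces a $v_1,v_2$-path of length exactly $\ell$ in $G - U$. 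The main technical hurdle is verifying the hypothesis $|A| \le D/\log^3 n$ of Lemma \ref{longconnect4}, which is precisely where the density assumption $d \ge \log^s n$ with $s \ge 20$ is needed.
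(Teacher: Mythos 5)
Your proposal is correct and follows essentially the same route as the paper: build a $(D,m,20m)$-adjuster avoiding $U\cup V(F_1)\cup V(F_2)$ via Lemma \ref{lem-adj-to-adj-path}, link $\{v_1,v_2\}$ to its ends with Lemma \ref{longconnect4}, and close the parity gap with the adjuster (your offset $40m$ instead of the paper's $20m$ just shifts $i$ into $[10m,20m]$, still within the adjuster's range). The only bookkeeping slip is that to get $P,Q$ in $G-A-U$ you must feed Lemma \ref{longconnect4} the forbidden set $A\cup U$ and verify $|A\cup U|\le D/\log^3 n$ (which holds, since $|U|\le D/2\log^3 n$), not merely $|A|\le D/\log^3 n$.
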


\begin{proof}
Let $0 < \eps_1 < 1$ be such that Lemma \ref{lem-adj-to-adj-path} holds and $d_0 = d_0(\eps_1, \eps_2, s)$ be large.

Let $U' = U \cup V(F_1) \cup V(F_2)$.
So $|U'| = |U| + |V(F_1)| + |V(F_2)| \le D/(2 \log^3 n) + D + D \le 3D < d / 1000$. 
By Lemma~\ref{lem-adj-to-adj-path} with $(G,U,r)_{\ref{lem-adj-to-adj-path}} = (G,U',20m)$, there is a $(d/100,m,20m)$-adjuster, and thus a $(D,m,20m)$-adjuster, say $\cA=(v_3,F_3,v_4,F_4,A)$, in $G-U'$ with length $\ell(\cA)\le |A|+1\leq 400m^2$. Let $\bar{\ell}=\ell-20m-\ell(\cA)$,
so that $0\leq \bar{\ell}\le n/\log^{10}n$. As $|A\cup U|\leq 400m^2+D/2\log^3n\leq D/\log^3n$, by Lemma~\ref{longconnect4} with $(G,D,A)_{\ref{longconnect4}} = (G,D,A \cup U)$, there are paths $P$ and $Q$ in $G-U-A$ which are vertex disjoint, both connect $\{v_1,v_2\}$ to $\{v_3,v_4\}$ and so that $\bar{\ell}\leq \ell(P)+\ell(Q)\leq \bar{\ell}+20m$.
Note that we can assume, without loss of generality, that $P$ is a $v_1,v_3$-path and $Q$ is a $v_2,v_4$-path.

Now, $0\leq \ell-\ell(P)-\ell(Q)-\ell(\cA)\leq 20m$. 
As $\cA$ is a $(D,m,20m)$-adjuster, there is a $v_3,v_4$-path in $G[A\cup \{v_3,v_4\}]$ with length $\ell(\cA)$, and therefore $\ell(\cA)= \pi(v_3,v_4,G)\mod 2$. 
Then, as $\ell(P)=\pi(v_1,v_3,G)\mod 2$, $\ell(Q)=\pi(v_2,v_4,G)\mod 2$, $\ell=\pi(v_1,v_2,G)\mod 2$ and $\pi(v_1,v_2,G)=\pi(v_1,v_3,G)+\pi(v_3,v_4,G)+\pi(v_4,v_2,G) \mod 2$, we have $\ell-\ell(P)-\ell(Q)-\ell(\cA)=0\mod 2$. That is, there is some $i\in \N$ with $2i=\ell-\ell(P)-\ell(Q)-\ell(\cA)$, where $i\leq 10m$.

Therefore, by the definition of the adjuster, there is a $v_3,v_4$-path $R$  with length $\ell(\cA)+2i=\ell-\ell(P)-\ell(Q)$ in $G[A\cup\{v_3,v_4\}]$. Then, $P\cup R\cup Q$ is a $v_1,v_2$-path with length $\ell$ in $G-U$.
\end{proof}

Finally, we are ready to show Lemma \ref{lem:dense}.

\begin{proof}[Proof of Lemma \ref{lem:dense}]
Let $0 < \eps_1 < 1$ be such that Lemma \ref{lem-finalconnect-dense} holds and $d_0 = d_0(\eps_1, \eps_2, s)$ be large. 
Let $t = \lfloor \sqrt{d}/(2 \log^{10} n) \rfloor$ and $\ell = 2 \lceil \log^7 n \rceil \equiv 0 \pmod 2$. 
Let $v_1,\cdots,v_t$ be $t$ distinct vertices in the same class of the partition of $G$ that will serve as $t$ core vertices in the final balanced clique subdivision construction.
Let $\mathcal{P} = \{P_1,\cdots,P_K\}$ be a maximal collection of pairwise internally disjoint paths such that
\stepcounter{propcounter}
\begin{enumerate}[label = {\bfseries \Alph{propcounter}\arabic{enumi}}]
\item For each $k \in [K]$, $P_k$ is a $v_i,v_j$-path of length $\ell$ for some distinct $i,j \in [t]$.
\item For distinct $i,j \in [t]$, there is at most one path in $\mathcal{P}$ with $v_i$ and $v_j$ as end vertices. 
\end{enumerate}

One can verify that if $K = {t \choose 2}$, then the graph formed by all the paths in $\mathcal{P}$ is a desired $\tk^{(\ell)}_t$. 
Hence, we may assume that there exist distinct $i,j \in [t]$ such that $\mathcal{P}$ contains no such $v_i,v_j$-path of length $\ell$.

Let $U = (\bigcup_{k \in [K]} V(P_k)) \bigcup \{v_q: q \in [t]\} \backslash \{v_i,v_j\}$.
So $|U| \le K \ell + t \le t^2 \ell \le d / 2 \log^{13} n$.
Choose $F_m \subseteq N(v_m)$ such that $|F_m| = d/\log^{10}n - 1$ and $F_m$ is disjoint from $F_{i+j-m} \cup U \cup \{v_i, v_j\}$ for $m \in \{i,j\}$.
This is possible because we may first choose $F_i \subseteq N(v_i) \setminus (U \cup \{v_j\})$ and then choose $F_j \subseteq N(v_j) \setminus (F_i \cup U \cup \{v_i\})$, noting that $|N(v_j)| - |F_i| - |U| > d - d/\log^{10}n - d/\log^{13}n > d/\log^{10}n$.
%Let $F_i$ be a subset of vertices in $N(v_i) \backslash U$ of size $d/\log^{10} n$ and $F_j$ be a subset of vertices in $N(v_j) \backslash U$ of size $d/\log^{10} n$ such that $F_i$ and $F_j$ are disjoint. 
%Such $F_i$ and $F_j$ exist as $\delta(G) \ge d > 2 d/\log^{10} n + d / \log^{13} n \ge |F_i| + |F_j| + |U|$.
By Lemma \ref{lem-finalconnect-dense}, there is a $v_i,v_j$-path $P_{K+1}$ of length $\ell$ in $G - U$.
Therefore, $\{P_1, P_2, \cdots, P_{K+1}\}$ contradicts the maximality of $\mathcal{P}$.
This completes the proof.
\end{proof}

\section{Constructing balanced clique subdivisions in sparse graphs}\label{sec:sparse}

In this section, we handle the case when graph is sparse, that is $d < \log^s n$ for some large $s$, and prove the following.

\begin{lemma} \label{lem:sparse}
There exists $\eps_1 > 0$ such that for any $0 < \varepsilon_2 < 1/5$ and $s \ge 20$,
there exist $d_0 = d_0(\varepsilon_1, \varepsilon_2, s)$ and some constant $t > 0$ such that the following holds for each $n \ge d \ge d_0$ and $d < \log^{s} n$.
 Suppose that $G$ is a $\tk_{d/2}^{(2)}$-free $n$-vertex bipartite $(\varepsilon_1, \varepsilon_2 d)$-expander with $\delta(G) \ge d$.
Then $G$ contains a $\tk^{(\ell)}_{t d}$ for some $\ell \in \mathbb{N}$.
\end{lemma}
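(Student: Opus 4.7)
The plan is to run a greedy path-insertion argument parallel to the proof of Lemma~\ref{lem:dense}, fixing the subdivision length $\ell := 2\lceil \log^7 n\rceil$ throughout (so $\ell$ is even and lies within the range required by Lemma~\ref{lem-finalconnect-sparse}), and always selecting the core vertices $v_1,\ldots,v_{td}$ from a single part of the bipartition so that $\pi(v_i,v_j,G)\equiv 0 \pmod 2$. The difference from Lemma~\ref{lem:dense} is that the $(D,m)$-expansions of the core vertices must be produced in two different ways depending on whether $G$ has many high-degree vertices. Fix the threshold $\Delta^\ast := \log^{30s} n$ and split into cases.

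\emph{Case 1} (many high-degree vertices): if at least $td$ vertices of $G$ have degree $\geq \Delta^\ast$, pick $td$ of them from a single part of the bipartition as core vertices. For each $v_i$ a $(D,1)$-expansion with $D := \log^{2s+11} n$ is provided as a star $\{v_i\}\cup S_i$ for any $S_i\subseteq N(v_i)$ of size $D-1$, and since the footprint of all previously placed paths has size at most $O(t^2 d^2\log^7 n)\ll \Delta^\ast$, fresh neighbours of $v_i$ remain throughout. The greedy argument then copies the proof of Lemma~\ref{lem:dense} with Lemma~\ref{lem-finalconnect-sparse} replacing Lemma~\ref{lem-finalconnect-dense}.

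\emph{Case 2} (few high-degree vertices): delete the fewer than $td$ high-degree vertices to obtain $G'$ with $\Delta(G')\leq\Delta^\ast$ and $\delta(G')\geq d/2$. Apply Lemma~\ref{lem-sparse-far-apart} to extract $\geq n^{1/5}$ vertices of $G'$ pairwise at distance $\geq D_0:=\log n/(50 s\log\log n)$; at least half lie in one part of the bipartition, and we select $td$ of them as core vertices. For each $v_i$ we build a $(D,m)$-expansion as follows: apply Lemma~\ref{lem-sparse-small-ball} with $r=(\log\log n)^5$ to obtain a set $Y_i\subseteq B^r_{G'}(v_i)$ of size $\geq 2d^2\log^{10}n$, then apply Lemma~\ref{lem-sparse-large-ball} with $k=\log n/(100 s\log\log n)$ to further expand $Y_i$ to $B^k_{G-W}(Y_i)$ of size $\geq \exp((\log n)^{1/4})$, and finally trim via Proposition~\ref{prop-trimming} to a $(D,m)$-expansion with $D:=\log^{2s+11} n$ and $m\leq r+k\leq \log n$. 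The greedy step now proceeds exactly as in Case~1: at each stage, pick an unconnected pair $(v_i,v_j)$, let $U$ be the union of previously used internal vertices together with all other core vertices, rebuild the two expansions inside $G-U$, and invoke Lemma~\ref{lem-finalconnect-sparse} to find an $\ell$-path between $v_i$ and $v_j$ in $G-U$.

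The main obstacle is ensuring that the hypotheses of Lemma~\ref{lem-sparse-small-ball} remain valid in Case~2 as the greedy process advances: that lemma requires the paths forbidden inside the local ball $B^r_{G'}(v_i)$ to form a collection of at most $cd$ \emph{consecutive shortest paths from $v_i$}, whereas the $\ell$-paths already placed are a priori arbitrary. The key observation is that the far-apartness $D_0\gg r$ forces any previously placed path between two \emph{other} core vertices to take a long detour to reach $B^r_{G'}(v_i)$; by insisting during the construction that the first $r$ vertices of every path leaving $v_i$ (and leaving $v_j$) be a shortest path in the then-current graph, the at most $td-1$ previous paths incident to $v_i$ can be treated as the required consecutive shortest paths, while all remaining previous vertices are absorbed into the outer forbidden set $W$ of Lemma~\ref{lem-sparse-large-ball} (whose slack $d^2\log^7 n$ comfortably dominates $O(t^2 d^2\log^7 n)$ for $t$ sufficiently small). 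This accounting keeps both expansion steps valid at every stage, so the greedy construction never stalls, yielding the desired $\tk^{(\ell)}_{td}$.
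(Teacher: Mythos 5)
Your two-case architecture (split on whether many vertices exceed a degree threshold; handle the high-degree case with stars as expansions and the bounded-degree case with far-apart core vertices plus the small-ball/large-ball expansion lemmas) is exactly the paper's, and your Case~1 is essentially the paper's Lemma~\ref{lem:sparse-large-deg}. You also correctly identify the central difficulty in Case~2: keeping the hypotheses of Lemma~\ref{lem-sparse-small-ball} (consecutive shortest paths from $v_i$ inside $B^{r}(v_i)$) valid as paths accumulate. However, your resolution of that difficulty is asserted rather than proved, and this is where the genuine gap lies. You propose to ``insist during the construction that the first $r$ vertices of every path leaving $v_i$ be a shortest path in the then-current graph,'' but the only connecting tool you invoke is the plain Lemma~\ref{lem-finalconnect-sparse}, which returns an arbitrary $v_i,v_j$-path of length $\ell$ in $G-U$ and gives no control whatsoever over (a) how that path exits the ball $B^{r}(v_i)$, or (b) whether its middle portion re-enters $B^{r}(v_i)$, $B^{r}(v_j)$, or the balls around the \emph{other} core vertices. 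Point (b) is not cured by far-apartness: the new path has length $\ell=2\lceil\log^7 n\rceil$, vastly exceeding the pairwise distance $\log n/(50s\log\log n)$, so it can easily wander into $B^{r}(v_q)$ for $q\notin\{i,j\}$ even while avoiding all previously used vertices, and such intrusions destroy the consecutive-shortest-path structure at $v_q$ for later rounds. Supplying this control is precisely the content of the paper's strengthened connection lemma (Lemma~\ref{lem-finalconnect-sparse2}), which builds the path explicitly as $L_1\cup P\cup R\cup Q\cup L_2$ with $L_i$ a shortest path from $v_i$ to the boundary of its current ball and with $P,R,Q$ routed outside all the relevant balls, while still hitting the exact length $\ell$ with the right parity; the paper also maintains the avoidance property as an explicit invariant of the maximal path family. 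Your proposal names the right idea but omits the mechanism, which is the main new technical work of this lemma.

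Two smaller points. First, in Case~2 you delete the high-degree vertices to form $G'$ but never verify that $G'$ is still a sublinear expander; this is needed for Lemmas~\ref{lem-sparse-small-ball}, \ref{lem-sparse-large-ball} and the connection step, and it is why the paper takes the number of core vertices small enough (of order $\eps_1\eps_2 d$) that the deleted set is at most half of the guaranteed expansion of any set of size at least $\eps_2 d/2$. Second, your expansions in Case~2 have size only $\log^{2s+11}n$ after trimming, whereas the lower bound $|B^{r_1}_{G-W}(v_i)|\ge d^2\log^{10}n$ from Lemma~\ref{lem-sparse-small-ball} must survive the removal of $W$ \emph{before} the large-ball expansion; this again relies on the invariant that previously placed paths do not intrude into the ball beyond their initial shortest-path segments, i.e.\ on the same missing mechanism.
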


Note that when such graph $G$ is sparse, we are able to find a balanced clique subdivision of size linear to its average degree.
This may be of independent interest.

We discuss two cases depending on whether there are many vertices with degree at least $\Delta(G) \ge c^2 d^2 \log^{10} n$ or not.

\subsection{When many vertices have large degree}

When many vertices have large degree, we could use the neighbourhood of those vertices to construct expansion and find paths of specific length between them. 

\begin{lemma} \label{lem:sparse-large-deg}
There exists $\eps_1 > 0$ such that for any $0 < \varepsilon_2 < 1/5$, $c > 0$ and $s \ge 20$,
there exist $d_0 = d_0(\varepsilon_1, \varepsilon_2, s)$ and a constant $0 < t_3 < c/3$ such that the following holds for each $n \ge d \ge d_0$ and $d < \log^{s} n$.
Suppose that $G$ is a $\tk_{d/2}^{(2)}$-free $n$-vertex bipartite $(\varepsilon_1, \varepsilon_2 d)$-expander subgraph $G$ with $\delta(G) \ge d$.
Moreover, suppose at least $2 t_3 d$ vertices have degree at least $\Delta = c^2 d^2 \log^{10} n$. 
Then $G$ contains a $\tk^{(\ell)}_{t_3 d}$ for some $\ell \in \mathbb{N}$.
\end{lemma}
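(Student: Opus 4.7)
The approach mirrors the proof of Lemma \ref{lem:dense}: fix the core vertices up front and then greedily construct all $\binom{t}{2}$ connecting paths of a common length $\ell$, replacing each failed pair by an invocation of a connection lemma---this time Lemma \ref{lem-finalconnect-sparse} in place of Lemma \ref{lem-finalconnect-dense}. The crucial simplification afforded by the large-degree hypothesis is that each candidate core vertex already has at least $\Delta = c^2 d^2 \log^{10} n$ neighbours, so a $(D,1)$-expansion of the right size is available for free as a subset of its neighbourhood; no adjuster machinery is needed to produce it.

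Concretely, let $\eps_1$ be as in Lemma \ref{lem-finalconnect-sparse} and pick $t_3 = c/(3\sqrt{2})$, so that $t_3 < c/3$. Set $\ell = 2\lceil \log^7 n \rceil$, $D = \Delta/3$, and $m = 800 \log^3 n/\eps_1$. Among the $2 t_3 d$ vertices of degree at least $\Delta$, by pigeonhole at least $t_3 d$ lie in a single class of the bipartition; label any $t = t_3 d$ of them $v_1,\dots,v_t$. Let $\mathcal{P} = \{P_1,\dots,P_K\}$ be a maximal collection of pairwise internally-disjoint $v_i,v_j$-paths of length $\ell$, containing at most one path for each unordered pair. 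If $K = \binom{t}{2}$ then $\bigcup_k P_k$ is the required $\tk^{(\ell)}_t$. Otherwise fix a missing pair $v_i,v_j$ and put $U = \bigcup_{k} V(P_k) \cup \{v_q : q\ne i,j\}$.

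By the choice of $t_3$ we obtain $|U| \le t^2 \ell \le 3 t_3^2 d^2 \log^7 n \le c^2 d^2 \log^7 n / 6 = D/(2\log^3 n)$. Since $|N(v_i)|, |N(v_j)| \ge 3D$ and $|U| + D \ll 3D$, we may pick disjoint sets $F_i \subseteq N(v_i)\setminus(U\cup\{v_j\})$ and $F_j \subseteq N(v_j)\setminus (F_i \cup U \cup \{v_i\})$ each of size $D-1$, so that $F_i\cup\{v_i\}$ and $F_j\cup\{v_j\}$ are $(D,1)$- and hence $(D,m)$-expansions of $v_i,v_j$. Because $v_i,v_j$ lie in the same bipartition class, $\pi(v_i,v_j,G) = 2 \equiv \ell \pmod 2$; because $d < \log^s n$, $D \le \log^{2s+10} n$, so taking $k = 2s+10$ puts $D$ in the range $[\log^{10}n, \log^k n]$ required by Lemma \ref{lem-finalconnect-sparse}. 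That lemma, applied to $(F_i, F_j, U, \ell)$, produces a $v_i,v_j$-path of length $\ell$ in $G - U$, contradicting the maximality of $\mathcal{P}$.

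The single point requiring care is parameter balancing: $D$ must be large enough that $|U| \le D/(2\log^3 n)$ throughout the greedy construction, yet small enough to fit comfortably inside each $N(v_i)$. The factor $c^2$ in $\Delta$ is exactly what reconciles these two demands, and it is also what forces the restriction $t_3 < c/3$ (or rather $t_3 \le c/\sqrt{18}$, which is compatible). No real expansion argument is needed inside the sparse regime here; the work of handling sparsity is deferred to the complementary case where few vertices have such high degree.
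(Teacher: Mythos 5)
Your proposal matches the paper's proof essentially step for step: the same maximality argument over a collection of pairwise internally disjoint paths of length $\ell = 2\lceil\log^7 n\rceil$ between high-degree core vertices in one bipartition class, the same use of the $\Delta$-sized neighbourhoods to supply the two disjoint $(D,m)$-expansions for a missing pair, and the same invocation of Lemma \ref{lem-finalconnect-sparse} with $k = 2s+10$ to produce the contradicting path. The only differences are cosmetic constant choices ($D = \Delta/3$ rather than $\Delta/4$, and an explicit $t_3 = c/(3\sqrt{2})$ instead of a generic $t_3 < c/3$), so the argument is correct and identical in substance.
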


\begin{proof}
Let $\eps_1 > 0$ be such that Lemma \ref{lem-finalconnect-sparse} holds and $d_0 = d_0(\eps_1, \eps_2, s)$ be large. 
Let $0 < t_3 < c/3$, $t = \lceil t_3 d \rceil$ and $\ell = 2 \lceil \log^7 n \rceil$. 
Let $v_1,\cdots,v_t$ be $t$ distinct vertices in the same partition of $G$ of degree at least $\Delta$ that will serve as $t$ core vertices in the final balanced clique subdivision construction.
Let $\mathcal{P} = \{P_1,\cdots,P_K\}$ be a maximal collection of pairwise internally disjoint paths such that
\stepcounter{propcounter}
\begin{enumerate}[label = {\bfseries \Alph{propcounter}\arabic{enumi}}]
\item For each $k \in [K]$, $P_k$ is a $v_i,v_j$-path of length $\ell$ for some distinct $i,j \in [t]$.
\item For distinct $i,j \in [t]$, there is at most one path in $\mathcal{P}$ with $v_i$ and $v_j$ as end vertices. 
\end{enumerate}

One can verify that if $K = {t \choose 2}$, then the graph formed by all the paths in $\mathcal{P}$ is a desired $\tk^{(\ell)}_t$. 
Hence, we may assume that there exist distinct $i,j \in [t]$ such that $\mathcal{P}$ contains no such $v_i,v_j$-path of length $\ell$.

Let $U = (\bigcup_{k \in [K]} V(P_k)) \bigcup \{v_q: q \in [t]\} \backslash \{v_i,v_j\}$.
So $|U| \le K \ell + t \le t^2 \ell \le t_3^2 d^2 \log^{7} n$.
Choose $F_m \subseteq N(v_m)$ such that $|F_m| = \frac{c^2}{4} d^2 \log^{10} n - 1$ and $F_m$ is disjoint from $F_{i+j-m} \cup U \cup \{v_i, v_j\}$ for $m \in \{i,j\}$.
This is possible because we may first choose $F_i \subseteq N(v_i) \setminus (U \cup \{v_j\})$ and then choose $F_j \subseteq N(v_j) \setminus (F_i \cup U \cup \{v_i\})$, noting that $|N(v_j)| - |F_i| - |U| > c^2 d^2 \log^{10} n - \frac{c^2}{4} d^2 \log^{10} n - t_3^2 d^2 \log^{7} n > \frac{c^2}{4} d^2 \log^{10} n$.
%Let $F_i$ be a subset of vertices in $N(v_i) \backslash U$ of size $\frac{c^2}{4} d^2 \log^{10} n$ and $F_j$ be a subset of vertices in $N(v_j) \backslash U$ of size $\frac{c^2}{4} d^2 \log^{10} n$ such that $F_i$ and $F_j$ are disjoint. 
%Such $F_i$ and $F_j$ exist as $\Delta \ge \frac{c^2}{2} d^2 \log^{10} n +  t_3^2 d^2 \log^{7} n $.
By Lemma \ref{lem-finalconnect-sparse} with $(G,U,D,k)_{\ref{lem-finalconnect-sparse}} = (G, U, \frac{c^2}{4} d^2 \log^{10} n, 2s+10)$, there is a $v_i,v_j$-path $P_{K+1}$ of length $\ell$ in $G - U$.
Therefore, $\{P_1, P_2, \cdots, P_{K+1}\}$ contradicts the maximality of $\mathcal{P}$.
This completes the proof.
\end{proof}

\subsection{When all vertices have bounded maximum degree}

We adopt the idea from \cite{K-Sz-2} and \cite{liu2017proof}.
We find two balls of radius $r_1$ and $r_2$ respectively ($r_1 \ll r_2$) around each core vertex, and try to connect two core vertices by a path of specific length avoiding balls of radius $r_1$ of all other core vertices.
The existence of such paths is guaranteed by the expander property and large expansion around each core vertex (namely, the ball of radius $r_2$).
Since the ball of radius $r_1$ of each core vertex is only used by the paths leading to it, we can also grow it to form a ball of radius $r_2$ in each step.

First, we prove a strengthened version of Lemma \ref{lem-finalconnect-sparse}.

\begin{lemma} \label{lem-finalconnect-sparse2}
There exists some $\eps_1>0$ such that, for any $0<\eps_2<1/5$ and $s \ge 20$, there exists $d_0=d_0(\eps_1,\eps_2,s)$ such that the following holds for each $n\geq d\geq d_0$ and $d < 2 \log^{s} n$. Suppose that $G$ is an $n$-vertex $\tk_{d/2}^{(2)}$-free bipartite $(\eps_1,\eps_2 d)$-expander with $\de(G)\ge d$.

Moreover, let $r = \lceil (\log \log n)^5 \rceil$ and $V_i = B_{G}^{r}(v_i)$ with $|V_i| \le \exp((\log \log n)^7)$ for $i \in [2]$.
Suppose that $\cP$ is a family of paths such that there is an ordering $\cP_i$ of consecutive shortest paths $\cP(v_i,r) := \{P(v_i,r): P \in \cP, v_i \in P\}$ from $v_i$ in $V_i$ 
for $i \in [2]$.

Suppose $U\subseteq V(G)$ such that $U \cap V_i = (V(\cP) \cap V_i) \setminus \{v_i\} $ for $i \in [2]$ and $|U|\leq \exp((\log \log n)^{10})$, and let $m=\frac{800}{\eps_1}\log^3n$. Suppose that $F_1,F_2\subseteq G - U$ are vertex disjoint subsets such that $B_{G-V(\cP) + v_i}^r(v_i) \subseteq F_i$ and $F_i$ is an $(\exp((\log \log n)^{90}),m)$-expansion of $v_i$ in $G - U$ for $i\in[2]$. Let $\log^{7}n\leq \ell\leq n/\log^{10}n$ be such that $\ell=\pi(v_1,v_2,G)\mod 2$.

Then, there is a $v_1,v_2$-path $P'$ with length $\ell$ in $G-U$.
Moreover, we have $\cP_i, P'(v_i,r)$ is an ordering of consecutive shortest paths from $v_i$ in $V_i$ 
and $(V(P') \setminus V(P'(v_i,r)) ) \cap B^{r}_{G - V(\cP) + v_i}(v_i) = \emptyset$
for $i \in [2]$.
\end{lemma}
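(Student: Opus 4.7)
My plan is to mimic the proof of Lemma~\ref{lem-finalconnect-sparse} but to route the first $r$ edges of $P'$ from each $v_i$ through the prescribed ball and the middle piece outside both balls. Concretely, for each $i \in [2]$ I will first pick a vertex $u_i \in B^r_{G-V(\cP)+v_i}(v_i)$ at distance exactly $r$ from $v_i$ in $G - V(\cP) + v_i$, and let $Q_i$ be a corresponding shortest $v_i,u_i$-path, which has length $r$ and lies entirely in the ball, hence in $F_i \subseteq G - U$. The existence of a boundary vertex at distance exactly $r$ follows from the size $|F_i| = \exp((\log \log n)^{90})$ together with the maximum degree bound $\Delta(G) \le \log^{30s} n$, which forces the ball (of radius $r = \lceil (\log \log n)^5 \rceil$) to attain radius $r$. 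Since $u_i$ lies in $F_i$ at $F_i$-distance $r$ from $v_i$ (via $Q_i$), the subgraph $F_i' := F_i \setminus (B^r_{G-V(\cP)+v_i}(v_i) \setminus \{u_i\})$ --- after trimming via Proposition~\ref{prop-trimming} --- should serve as a $(D, m)$-expansion of $u_i$ in $G - U_1$, where $U_1 := U \cup \bigcup_{i \in [2]}(B^r_{G-V(\cP)+v_i}(v_i) \setminus \{u_i\})$ and $D = \exp((\log \log n)^{89})$.

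Next, I set $\ell' := \ell - 2r$; one checks $\log^7 n \le \ell' \le n/\log^{10} n$ and $\ell' \equiv \pi(u_1, u_2, G) \pmod 2$, the latter inherited from $\ell \equiv \pi(v_1, v_2, G) \pmod 2$ together with the lengths $r$ of $Q_i$. Following the template of Lemma~\ref{lem-finalconnect-sparse}, I will apply Lemma~\ref{lem:large-adjuster} in $G - U_1 - V(F_1') - V(F_2')$ to obtain a $(D, m, 20m)$-adjuster $\cA$ of length at most $400m^2$, and then apply Lemma~\ref{longconnect4} to produce two vertex-disjoint paths connecting $\{u_1, u_2\}$ to the endpoints of $\cA$ with total length approximately $\ell' - \ell(\cA)$. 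The arithmetic-progression flexibility of $\cA$ then fine-tunes the total length to exactly $\ell'$, yielding a $u_1,u_2$-path $P''$ of length $\ell'$ in $G - U_1$. A direct appeal to Lemma~\ref{lem-finalconnect-sparse} is not available because the enlarged $|U| \le \exp((\log \log n)^{10})$ exceeds its permitted bound $D/(2 \log^3 n)$ for any constant $D$, whereas Lemmas~\ref{lem:large-adjuster} and~\ref{longconnect4} tolerate avoided sets up to $\exp((\log \log n)^{100})$.

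Setting $P' := Q_1 \cup P'' \cup Q_2$ then gives the desired $v_1,v_2$-path of length $\ell$ in $G - U$. For the ``moreover'' claim, $P'(v_i, r) = Q_i$ is by construction a shortest $v_i,u_i$-path in $G - V(\cP) + v_i$; under the invariant $V(\cP) \cap V_i = V(\cP_i)$ --- maintained by the outer inductive argument precisely via the second moreover clause --- $Q_i$ is also a shortest path in $V_i - V(\cP_i) + v_i$, so $\cP_i, P'(v_i, r)$ forms an ordering of consecutive shortest paths from $v_i$ in $V_i$. The ball-avoidance follows because $P''$ lives in $G - U_1$ (so it avoids each ball except at $u_i$), and $V(Q_{3-i}) \subseteq F_{3-i}$ is disjoint from $F_i \supseteq B^r_{G-V(\cP)+v_i}(v_i)$.

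The main obstacle I anticipate is justifying that $F_i'$ remains a genuine $(D, m)$-expansion of $u_i$. A priori, stripping the interior of the ball from $F_i$ could disconnect $u_i$ from the outer shell of $F_i$, if those outer vertices were reachable from $v_i$ only through interior ball vertices. Overcoming this likely requires either exploiting the specific BFS-style construction of $F_i$ in the outer proof (so that the outer shell of $F_i$ is grown from $u_i$ itself, making $u_i$ a natural center), or else rebuilding an expansion of $u_i$ in $G - U_1$ from scratch via iterated application of Lemma~\ref{lem:simple-adjuster-robust-new} along the lines of Lemma~\ref{lem:large-adjuster}. Once this technical step is settled, the rest of the argument reduces to the machinery already encapsulated in the proof of Lemma~\ref{lem-finalconnect-sparse}.
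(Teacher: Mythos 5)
Your overall route coincides with the paper's: peel off a length-$r$ initial segment inside each ball ending at a boundary vertex at distance exactly $r$ from $v_i$, then connect the two boundary vertices by a path of length $\ell-2r$ of the correct parity via Lemma \ref{lem:large-adjuster} and Lemma \ref{longconnect4}, following the template of Lemma \ref{lem-finalconnect-sparse}, and finally verify the two ``moreover'' clauses. However, the step you yourself flag is a genuine gap, not a routine technicality: you first fix an arbitrary vertex $u_i$ at distance exactly $r$ and then hope that $F_i\setminus\bigl(B^r_{G-V(\cP)+v_i}(v_i)\setminus\{u_i\}\bigr)$ contains a large $(D,m)$-expansion of $u_i$. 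Nothing in the hypotheses guarantees this: the bulk of $F_i$ outside the ball may attach to the ball only through boundary vertices other than your chosen $u_i$, in which case deleting the rest of the ball leaves $u_i$ with a small (possibly empty) piece. Neither of your proposed repairs closes this. The lemma must be proved for arbitrary $F_i$ satisfying the stated hypotheses, so you cannot import a BFS-style construction of $F_i$ from the outer argument of Lemma \ref{lem:sparse-no-large-deg}; and Lemmas \ref{lem:simple-adjuster-robust-new} and \ref{lem:large-adjuster} produce adjusters whose expansion centres you do not get to choose, so they cannot manufacture an expansion centred at a prescribed $u_i$.

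The paper resolves exactly this point by reversing the order of the choices. Since $F_i$ is connected, $|F_i|=\exp((\log\log n)^{90})$ and $|B^r_{G-V(\cP)+v_i}(v_i)|\le |V_i|\le \exp((\log\log n)^{7})$, a pigeonhole argument gives a component $C$ of $F_i$ minus the ball of size at least $\exp((\log\log n)^{80})$; one then takes the boundary vertex $v_i'$ to be a ball vertex where $C$ attaches and sets $F_i'=C\cup\{v_i'\}$. Such an attachment vertex is automatically at distance exactly $r$ from $v_i$: if a ball vertex at distance less than $r$ had a neighbour $y\in F_i$ outside the ball, then $y\in V_i$ and $y\notin U\supseteq (V(\cP)\cap V_i)\setminus\{v_i\}$, so $y$ would itself lie in $B^r_{G-V(\cP)+v_i}(v_i)$, a contradiction. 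In short, the correct move is ``find the large piece first, then name the boundary vertex,'' not ``name the boundary vertex first, then look for a large piece attached to it.'' A smaller issue: your justification for the existence of a distance-$r$ vertex invokes a maximum-degree bound $\Delta(G)\le \log^{30s} n$, which is not among the hypotheses of this lemma; the correct justification is the attachment argument just described, using only $|F_i|>|V_i|$, the connectivity of $F_i$, and $U\cap V_i=(V(\cP)\cap V_i)\setminus\{v_i\}$. With the paper's choice of $v_i'$ and $F_i'$ in place of your $u_i$ and $F_i'$, the remainder of your plan (parity bookkeeping, adjuster plus Lemma \ref{longconnect4}, and the verification of the ``moreover'' statements) matches the paper's proof.
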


\begin{proof}
Let $0 < \eps_1 < 1$ be such that Lemma~\ref{lem:large-adjuster} holds and $d_0 = d_0(\eps_1, \eps_2)$ be large.

For $i \in [2]$, choose $v_i' \in B_{G-V(\cP)+v_i}^r(v_i)$ and $F_i' \subseteq (F_i \setminus B_{G-V(\cP)+v_i}^r(v_i)) \cup \{v_i'\}$ such that $F_i'$ is an $(\exp((\log \log n)^{80}), m)$-expansion of $v_i'$ in $F_i$.
This is possible because 
since $F_i$ is connected, $F_i \setminus B_{G-V(\cP)+v_i}^r(v_i)$ has at most $|B_{G-V(\cP)+v_i}^r(v_i)|+1$ components. 
By Pigeonhole Principle, one of them has size at least $|F_i \setminus B_{G-V(\cP)+v_i}^r(v_i)|/(|B_{G-V(\cP)+v_i}^r(v_i)|+1) \ge (|F_i|-|V_i|)/(|V_i|+1) \ge (\exp((\log \log n)^{90})-\exp((\log \log n)^{7}))/(\exp((\log \log n)^{7})+1) > \exp((\log \log n)^{80})$.
For $i \in [2]$, let $L_i$ be a shortest path from $v_i$ to $v_i'$ in $B_{G-V(\cP)+v_i}^r(v_i)$.
Note that in fact $\ell(L_i) = r < m$ by our choice.

Let $U' = U \cup V(F_1) \cup V(F_2) \cup V_1 \cup V_2$.
So $|U'| = |U| + |V(F_1)| + |V(F_2)| + |V_1| + |V_2| \le \exp((\log \log n)^{10}) + \exp((\log \log n)^{90}) + \exp((\log \log n)^{90}) + \exp((\log \log n)^{7}) + \exp((\log \log n)^{7}) \le \exp((\log \log n)^{100})$. 
By Lemma~\ref{lem:large-adjuster} with $(G,U,D,r)_{\ref{lem:large-adjuster}} = (G,U',\exp((\log \log n)^{100}),30m)$, there is a $(\exp((\log \log n)^{100}),m,30m)$-adjuster, and thus a $(\exp((\log \log n)^{80}),m,30m)$-adjuster, say $\cA=(v_3,F_3,v_4,F_4,A)$, in $G-U'$ with length $\ell(\cA)\le |A|+1\leq 600m^2$. Let $\bar{\ell}=\ell-20m-\ell(\cA)-\ell(L_1)-\ell(L_2)$,
so that $0\leq \bar{\ell}\le n/\log^{10}n$. As $|A\cup U \cup B_{G-V(\cP)+v_1}^r(v_1) \cup B_{G-V(\cP)+v_2}^r(v_2)|\leq 600m^2+\exp((\log \log n)^{10})+ \exp((\log \log n)^{7}) + \exp((\log \log n)^{7}) \leq \exp((\log \log n)^{80})/\log^3 n$, by Lemma~\ref{longconnect4} with $(G,D,A)_{\ref{longconnect4}} = (G,\exp((\log \log n)^{80}),A \cup U \cup B_{G-V(\cP)+v_1}^r(v_1) \cup B_{G-V(\cP)+v_2}^r(v_2) \setminus \{v_1',v_2'\})$, there are paths $P$ and $Q$ in $G-(A \cup U \cup B_{G-V(\cP)+v_1}^r(v_1) \cup B_{G-V(\cP)+v_2}^r(v_2) \setminus \{v_1',v_2'\})$ which are vertex disjoint, both connect $\{v_1',v_2'\}$ to $\{v_3,v_4\}$ and so that $\bar{\ell}\leq \ell(P)+\ell(Q)\leq \bar{\ell}+20m$.
Note that we can assume, without loss of generality, that $P$ is a $v_1',v_3$-path and $Q$ is a $v_2',v_4$-path.

Now, $0\leq \ell-\ell(P)-\ell(Q)-\ell(\cA)-\ell(L_1)-\ell(L_2)\leq 20m$. 
As $\cA$ is a $(\exp((\log \log n)^{80}),m,30m)$-adjuster, there is a $v_3,v_4$-path in $G[A\cup \{v_3,v_4\}]$ with length $\ell(\cA)$, and therefore $\ell(\cA)= \pi(v_3,v_4,G)\mod 2$. 
Then, as $\ell(L_1)=\pi(v_1,v_1',G)\mod 2$, $\ell(L_2)=\pi(v_2,v_2',G)\mod 2$, $\ell(P)=\pi(v_1',v_3,G)\mod 2$, $\ell(Q)=\pi(v_2',v_4,G)\mod 2$, $\ell=\pi(v_1,v_2,G)\mod 2$ and $\pi(v_1,v_2,G)=\pi(v_1,v_1',G)+\pi(v_1',v_3,G)+\pi(v_3,v_4,G)+\pi(v_4,v_2',G)+\pi(v_2',v_2,G) \mod 2$, we have $\ell-\ell(P)-\ell(Q)-\ell(\cA)-\ell(L_1)-\ell(L_2)=0\mod 2$. That is, there is some $i\in \N$ with $2i=\ell-\ell(P)-\ell(Q)-\ell(\cA)-\ell(L_1)-\ell(L_2)$, where $i\leq 10m$.

Therefore, by the definition of adjuster, there is a $v_3,v_4$-path $R$  with length $\ell(\cA)+2i=\ell-\ell(P)-\ell(Q)-\ell(L_1)-\ell(L_2)$ in $G[A\cup\{v_3,v_4\}]$. Then, $P' = L_1 \cup P\cup R\cup Q \cup L_2$ is a $v_1,v_2$-path with length $\ell$ in $G-U$.
Moreover, $\cP_i, L_i$ is an ordering of consecutive shortest paths from $v_i$ in $V_i$ for $i \in [2]$.
By construction, $V(P \cup R \cup Q) \setminus \{v_1',v_2'\}$ is disjoint from $B_{G-V(\cP)+v_i}^r(v_i)$ for $i \in [2]$.
This completes the proof.
\end{proof}

Now we are ready to show the following.

\begin{lemma} \label{lem:sparse-no-large-deg}
There exists $\eps_1 > 0$ such that for any $0 < \varepsilon_2 < 1/5$ and $s \ge 20$,
there exist $d_0 = d_0(\varepsilon_1, \varepsilon_2, s)$ and a constant $0 < t_4 < 1/3$ such that the following holds for each $n \ge d \ge d_0$ and $d < 2 \log^{s} n$.
Suppose that $G$ is a $\tk_{d/2}^{(2)}$-free $n$-vertex bipartite $(\varepsilon_1, \varepsilon_2 d)$-expander subgraph $G$ with $\delta(G) \ge d$ and $\Delta(G) \le d^2 \log^{10} n$. 
Then $G$ contains a $\tk^{(\ell)}_{t_4 d}$ for some $\ell \in \mathbb{N}$.
\end{lemma}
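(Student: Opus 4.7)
The plan is to greedily build an $\ell$-balanced $K_t$-subdivision with $t = \lceil t_4 d\rceil$ core vertices, inserting one path at a time via Lemma~\ref{lem-finalconnect-sparse2} while carrying a structural invariant that keeps that lemma applicable. Let $c$ be the constant from Lemma~\ref{lem-sparse-small-ball}, pick $t_4 < c/3$, and set $r = \lceil(\log\log n)^5\rceil$, $k = \lfloor\log n/(100s\log\log n)\rfloor$, $m_0 = \frac{800}{\varepsilon_1}\log^3 n$, and $\ell = 2\lceil\log^7 n\rceil$. Since $\Delta(G)\le d^2\log^{10}n\le\log^{15s}n$ for $s\ge 20$, applying Lemma~\ref{lem-sparse-far-apart} with parameter $s/2$ yields at least $n^{1/5}$ vertices pairwise at distance at least $4k$ apart; I pick $t$ of them in the same partition class as the core vertices $v_1,\dots,v_t$ and set $V_i = B^r_G(v_i)$. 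The degree bound gives $|V_i|\le(d^2\log^{10}n)^r\le\exp((\log\log n)^7)$, and the $4k$ separation ensures $B^{r+k}_G(v_i)$ and $B^{r+k}_G(v_j)$ are disjoint for $i\ne j$.

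I maintain inductively a family $\cP$ of pairwise internally vertex-disjoint $v_i,v_j$-paths of length $\ell$ (at most one per pair) satisfying the invariant that for every $i\in[t]$ the prefixes $\{P(v_i,r):P\in\cP,\,v_i\in P\}$ are, in some ordering $\cP_i$, consecutive shortest paths from $v_i$ in $V_i$. Once $|\cP|=\binom{t}{2}$, $\bigcup\cP$ is the required $\tk^{(\ell)}_t$. So suppose the invariant holds and $\{i,j\}$ is still unconnected. For each $m\in\{i,j\}$, at most $t-1\le cd$ paths of $\cP$ meet $v_m$, so the invariant together with Lemma~\ref{lem-sparse-small-ball} applied to $H=G$ and $v=v_m$ gives $Y_m := B^r_{G-V(\cP)+v_m}(v_m)$ with $|Y_m|\ge 2d^2\log^{10}n$. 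Taking $W := V(\cP)\cup\bigcup_{i'\ne i,j}(V_{i'}\cup\{v_{i'}\})\cup Y_{3-m}$, whose size is dominated by $|V(\cP)|\le t^2\ell = O(t_4^2 d^2\log^7 n)\le d^2\log^7 n$ by the choice of $t_4$, Lemma~\ref{lem-sparse-large-ball} then expands $Y_m$ within $k$ steps in $G-W$ to a set of size at least $\exp((\log n)^{1/4})$, and Proposition~\ref{prop-trimming} trims this to an $(\exp((\log\log n)^{90}),m_0)$-expansion $F_m\supseteq Y_m$ of $v_m$ in $G-W$. The disjointness of $B^{r+k}_G(v_i)$ and $B^{r+k}_G(v_j)$ forces $F_i\cap F_j=\emptyset$.

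Now I apply Lemma~\ref{lem-finalconnect-sparse2} to $(v_1,v_2,F_1,F_2)\leftarrow(v_i,v_j,F_i,F_j)$ together with $U := (V(\cP)\setminus\{v_i,v_j\})\cup\{v_{i'}:i'\ne i,j\}\cup\bigcup_{i'\ne i,j}V_{i'}$. One checks $|U|\le\exp((\log\log n)^{10})$, and $U\cap V_m = (V(\cP)\cap V_m)\setminus\{v_m\}$ because the $V_{i'}$ for $i'\ne m$ are disjoint from $V_m$ by the $4k$ separation. Since $v_i,v_j$ lie in the same partition class, $\pi(v_i,v_j,G)=2$ and so $\ell\equiv\pi(v_i,v_j,G)\pmod 2$. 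The lemma produces a $v_i,v_j$-path $P'$ of length $\ell$ in $G-U$, and its ``moreover'' clause states precisely that $\cP_m,P'(v_m,r)$ is an ordering of consecutive shortest paths from $v_m$ in $V_m$ and that $P'$ meets $B^r_{G-V(\cP)+v_m}(v_m)$ only inside the prefix $P'(v_m,r)$. Since $V_{i'}\subseteq U$ for every other $i'$, the path $P'$ avoids those balls entirely, so the invariant at the non-endpoint core vertices is preserved trivially. Thus $\cP\cup\{P'\}$ still satisfies the invariant, and the induction closes after $\binom{t}{2}$ insertions.

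The main obstacle is the delicate size accounting that must hold simultaneously through every one of the $\binom{t}{2}$ iterations: the dominant quantity $|V(\cP)|\le t^2\ell = O(d^2\log^7n)$ has to fit within the $d^2\log^7n$ budget of Lemma~\ref{lem-sparse-large-ball}, which is precisely what forces $t_4$ to be a sufficiently small constant in terms of the $c$ of Lemma~\ref{lem-sparse-small-ball}; beyond that, the three nested expansions (the polylogarithmic $Y_m$ from consecutive-shortest-paths expansion, the near-exponential $F_m$, and the connecting path itself) must all respect the hypotheses of Lemmas~\ref{lem-sparse-small-ball},~\ref{lem-sparse-large-ball}, and~\ref{lem-finalconnect-sparse2}. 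Once $t_4$ and the far-apart parameter are tuned and $d$ is taken sufficiently large, all remaining (iterated-logarithmic) budgets are comfortable and the proof goes through.
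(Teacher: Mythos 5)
Your strategy is essentially the paper's own: far‑apart core vertices via Lemma~\ref{lem-sparse-far-apart}, an inductively maintained family $\mathcal{P}$ of length-$\ell$ paths whose prefixes at each core vertex are consecutive shortest paths, the two-stage expansion via Lemmas~\ref{lem-sparse-small-ball} and~\ref{lem-sparse-large-ball}, and Lemma~\ref{lem-finalconnect-sparse2} to insert the missing path. There is, however, one genuine gap: the step where you assert that the stated invariant plus Lemma~\ref{lem-sparse-small-ball} gives $|Y_m|\ge 2d^2\log^{10}n$ for $Y_m=B^{r}_{G-V(\mathcal{P})+v_m}(v_m)$. Lemma~\ref{lem-sparse-small-ball} only controls the ball obtained after deleting the \emph{prefixes}, i.e.\ it yields $|B^{r}_{G-V(\mathcal{P}(v_m,r))+v_m}(v_m)|\ge 2d^2\log^{10}n$. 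Your $Y_m$ deletes all of $V(\mathcal{P})$, and the invariant you carry (consecutive shortest prefixes only) does not prevent the \emph{middle} portions of previously built paths from passing through $B^{r}_{G-V(\mathcal{P}(v_m,r))+v_m}(v_m)$ and shrinking it far below $d^2\log^{10}n$. The paper closes exactly this hole by adding a third inductive condition, namely $(V(\mathcal{P})\setminus V(\mathcal{P}(v_q,r)))\cap B^{r}_{G-V(\mathcal{P}(v_q,r))+v_q}(v_q)=\emptyset$ for every core vertex $v_q$, which forces the two balls to coincide; it is preserved at the two endpoints by the second half of the ``moreover'' clause of Lemma~\ref{lem-finalconnect-sparse2} (which you quote but never fold into your induction hypothesis) and at all other core vertices because the new path avoids $V_q\subseteq U$ entirely. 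You have every ingredient needed, but as written the size bound on $Y_m$ is a non sequitur; you must add this avoidance condition to the invariant you maintain.

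A smaller point in the same step: when you apply Lemma~\ref{lem-sparse-large-ball} you need $Y_m$ and $W$ disjoint, yet your $W$ contains $V(\mathcal{P})\ni v_m$ while $v_m\in Y_m$; take $W\supseteq V(\mathcal{P})\setminus\{v_i,v_j\}$ (as the paper does) rather than $V(\mathcal{P})$ itself.
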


\begin{proof}
Let $\eps_1 > 0$ be such that Lemma \ref{lem-finalconnect-sparse2} holds and $d_0 = d_0(\eps_1, \eps_2, s)$ be large. 
Let $0<t_4<1/3$ be such that $t = \lceil t_4 d \rceil$, $l = 2 \lceil \log^7 n \rceil$, $r_1 = \lceil (\log \log n)^5 \rceil$ and $r_2 = \lceil \log n/(300s \log \log n) \rceil$. 

By Lemma \ref{lem-sparse-far-apart} with $s_{\ref{lem-sparse-far-apart}} = s$, let $v_1,\cdots,v_{2t}$ be $2t$ distinct vertices that are at distance at least $\log n/(50s \log \log n) > 3r_2$ apart.
At least half of them are in the same partition of $G$.
Without loss of generality, let $v_1,\cdots,v_{t}$ be $t$ vertices in the same partition of $G$ that will serve as $t$ core vertices in the final balanced clique subdivision construction.
By Lemma \ref{lem-sparse-small-ball} with $q=0$, for each $i \in [t]$, let $V_i = B^{r_1}_G(v_i)$ be the ball of radius $r_1$ around $v_i$.
Note that $|V_i| \le \Delta(G)^{r_1} \le (d^2 \log^{10} n)^{(\log \log n)^5+1} < \exp((\log \log n)^7)$.

Let $K \subseteq {[t] \choose 2}$ be maximal such that there exists a family of pairwise internally disjoint paths $\mathcal{P} = \{P_k: k \in K\}$ such that
\stepcounter{propcounter}
\begin{enumerate}[label = {\bfseries \Alph{propcounter}\arabic{enumi}}]
\item For each $\{i,j\} \in K$, $P_{\{i,j\}}$ is a $v_i,v_j$-path of length $\ell$. $P_{\{i,j\}}$ is disjoint from $V_q$ for $q \in [t] \backslash \{i,j\}$.
\item For each $i \in [t]$, there is some ordering of $\cP(v_i,r_1) := \{P_k(v_i, r_1): k \in K, i \in k\}$, that is, all the subpaths of length $r_1$ with one end vertex $v_i$ of the paths in $\mathcal{P}$ incident to $v_i$, so that they form consecutive shortest paths from $v_i$ in $V_i$.
\item For each $i \in [t]$, $(V(\cP) \setminus V(\cP(v_i,r_1))) \cap B^{r_1}_{G - V(\cP(v_i,r_1)) + v_i}(v_i) = \emptyset$.
\end{enumerate}

One can verify that if $K = {[t] \choose 2}$, then the graph formed by all the paths in $\mathcal{P}$ is a desired $\tk^{(\ell)}_t$. 
Hence, we may assume that there exist distinct $i,j \in [t]$ such that $\mathcal{P}$ contains no such $v_i,v_j$-path of length $\ell$.

Let $W = V(\cP) \backslash \{v_i,v_j\}$.
So $|W| \le |K| \ell < t^2 \ell/2 \le d^2 \log^7 n$.
By Lemma \ref{lem-sparse-small-ball} with $\{P_1,\dots,P_q\}_{\ref{lem-sparse-small-ball}} $ $= \cP(v_i,r_1)$, we have $|B_{G - V(\cP(v_i,r_1)) + v_i}^{r_1}(v_i)| \ge d^2 \log^{10} n$.
Since $(V(\cP) \setminus V(\cP(v_i,r_1))) $ $\cap B^{r_1}_{G - V(\cP(v_i,r_1)) + v_i}(v_i)$ $ = \emptyset$,
we have $B_{G - W}^{r_1}(v_i) = B_{G - V(\cP(v_i,r_1)) + v_i}^{r_1}(v_i)$ and $|B_{G - W}^{r_1}(v_i)| \ge d^2 \log^{10} n$.
By Lemma \ref{lem-sparse-large-ball} with $(Y,W,s)_{\ref{lem-sparse-large-ball}} = (B_{G - W}^{r_1}(v_i), W, 3s)$,
we have $|B^{r_1 + r_2}_{G-W}(v_i)| \ge \exp( (\log n)^{1/4} ) > \exp((\log \log n)^{90})$.
Similarly, $B_{G - W}^{r_1}(v_j) = B_{G - V(\cP(v_j,r_1)) + v_j}^{r_1}(v_j)$ and $|B^{r_1 + r_2}_{G-W}(v_j)| > \exp((\log \log n)^{90})$.
Since for any distinct $p,q \in [t]$, $v_p$ and $v_q$ are at distance at least $3r_2$ by our choice, $B^{r_1 + r_2}_{G-W}(v_i)$ (respectively $B^{r_1 + r_2}_{G-W}(v_j)$) is disjoint from $V_q$ for $q \in [t] \backslash \{i\}$ (respectively $q \in [t] \backslash \{j\}$).  
Let $U = W \cup (\cup_{q \in [t] \backslash \{v_i,v_j\} } V_q)$ and $m = \frac{800}{\eps_1} \log^3 n$.
Hence, there exist vertex disjoint $B^{r_1}_{G-W}(v_i) \subseteq F_i \subseteq B^{r_1 + r_2}_{G-W}(v_i)$ and $B^{r_1}_{G-W}(v_j) \subseteq F_j \subseteq B^{r_1 + r_2}_{G-W}(v_j)$ such that $F_i$ is $(\exp((\log \log n)^{90}), m)$-expansion of $v_i$ in $G - U$ and $F_j$ is $(\exp((\log \log n)^{90}), m)$-expansion of $v_j$ in $G - U$.

Note that $|U| \le |K| \ell + t \exp((\log \log n)^{7}) < t^2 \ell/2 + t \exp((\log \log n)^{7}) <  \exp((\log \log n)^{8})$.
By applying Lemma \ref{lem-finalconnect-sparse2} with $(F_1,F_2,U,\cP)_{\ref{lem-finalconnect-sparse2}} = ( F_i,F_j, U, \cP)$, we have that there is a $v_i,v_j$-path $P_{\{i,j\}}$ of length $\ell$ in $G - U$.
Moreover, previous ordering of $\cP(v_i,r_1)$ and $P_{\{i,j\}}(v_i,r_1)$  (respectively, previous ordering of  $\cP(v_j,r_1)$ and $P_{\{i,j\}}(v_j,r_1)$) are consecutive shortest paths from $v_i$ in $V_i$ (respectively from $v_j$ in $V_j$).
In addition,  $(V(P_{\{i,j\}}) \setminus V(P_{\{i,j\}}(v_q,r_1))  ) \cap B^{r_1}_{G - V(\cP(v_q,r_1)) + v_q}(v_q) = (V(P_{\{i,j\}}) \setminus V(P_{\{i,j\}}(v_q,r_1))  ) \cap B^{r_1}_{G - V(\cP) + v_q}(v_q) = \emptyset$ for $q \in \{i,j\}$.
Therefore, $K \cup \{\{i,j\}\}$ contradicts the maximality of $K$. This completes the proof.
\end{proof}

We can now prove Lemma \ref{lem:sparse}.

\begin{proof}[Proof of Lemma \ref{lem:sparse}]
Let $\eps_1 > 0$ be such that Lemmas \ref{lem:sparse-large-deg} and \ref{lem:sparse-no-large-deg} hold.
Let $d_0 = d_0(\eps_1, \eps_2, s)$ be large and $t_3, t_4$ be constants in Lemmas \ref{lem:sparse-large-deg} and \ref{lem:sparse-no-large-deg} respectively. 

Let $t_2 = \min\{\frac{\eps_1\eps_2}{8 \log^2 (15/2)}, t_3, \frac{1}{3}t_4\}$ and $c=1/4$. 
Let $\Delta = c^2 d^2 \log^{10} n$ and $L = \{v \in V(G): d(v) \ge \Delta\}$.
If $|L| \ge 2t_2 d$, then by Lemma \ref{lem:sparse-large-deg} $G$ contains a $\tk^{(\ell_1)}_{t_2 d}$ for some $\ell_1 \in \mathbb{N}$.
So $|L| < 2t_2 d$ and let $H = G - L$. 

\medskip

\noindent \textbf{Claim.} $H$ is a $\tk_{d/2}^{(2)}$-free bipartite $(\eps_1 /2, \eps_2 d)$-expander satisfying $\delta(H) > d/3$, $|V(H)| > n/3$, $\delta(H) < 2\log^s |V(H)|$ and $\Delta(H) < \delta(H) \log^{10} |V(H)|$.

As $G$ is  $\tk_{d/2}^{(2)}$-free bipartite, $H$ is also $\tk_{d/2}^{(2)}$-free bipartite.
It is easy to see that $\delta(H) \ge \delta(G) - |L| \ge d - 2t_2d > d/3$.
Since $n \ge d$ is large, $|V(H)| \ge |V(G)| - |L| \ge n - 2t_2 d > n/3$.
Note $\delta(H) \le d < \log^s n < 2 \log^s |V(H)|$.
Moreover, we have $\Delta(H) \le \Delta = c^2 d^2 \log^{10} n < c^2 (3\delta(H))^2 \log^{10} (3|V(H)|) < \delta(H)^2 \log^{10} |V(H)|$.

To finish the proof of the claim, we show that $H$ is an $(\eps_1/2, \eps_2 d)$-expander. 
Let $k = \eps_2 d$.
For any set $X$ in $H$ of size $x \ge k/2$ with $x \le |V(H)|/2 \le |V(G)|/2$, we have
$$|N_G(X)|\geq \eps(x,\eps_1, \eps_2 d)\cdot x \ge \eps(\frac{k}{2},\eps_1, \eps_2 d) \cdot \frac{k}{2} \ge \frac{\eps_1}{\log^2 (15/2)} \cdot \frac{\eps_2 d}{2} \ge 4t_2 d \ge 2|L|. $$
So $|N_H(X)| \ge |N_G(X)| - |L|\geq  |N_G(X)|/2 \geq \eps(x,\eps_1, \eps_2 d)/2 \cdot x = \eps(x,\eps_1/2, \eps_2 d) \cdot x$, as required.
\medskip

Finally by Lemma \ref{lem:sparse-no-large-deg}, $H$ contains a $\tk^{(\ell_2)}_{t_2 d}$ for some $\ell_2 \in \mathbb{N}$, and so does $G$.
This completes the proof.
\end{proof}

\section{Proof of Theorem \ref{thm:main} } \label{sec:proof}

Now we show that Theorem \ref{thm:main} follows from Lemmas \ref{lem:dense} and \ref{lem:sparse}.

\begin{proof}[Proof of Theorem \ref{thm:main}]
Let $\varepsilon_1 > 0$ be such that Corollary \ref{cor-expander} and Lemmas \ref{lem:dense} and \ref{lem:sparse} hold and $\varepsilon_2 = 1/10$. 
Let $G$ be a graph with average degree $d(G) \ge d$ and $d_0 = d/8$.
We may assume that $G$ is $\tk^{(2)}_{d/2}$-free.
By Corollary \ref{cor-expander}, $G$ has a bipartite $(\varepsilon_1, \varepsilon_2 d_0)$-expander subgraph $H$ with $\delta(H) \ge d_0$.
So $H$ is also $\tk^{(2)}_{d/2}$-free.

%Now we divide into two cases when $H$ is dense or not.
Let $n_0 = |V(H)|$ and $s = \frac{20}{1-2c}$.
If $d_0 \ge \log^{s} n_0$, then by Lemma \ref{lem:dense}, $G$ contains a $\tk^{(\ell)}_{\sqrt{d_0}/(2\log^{10} n_0)}$ for some $\ell \in \mathbb{N}$, 
and thus contains a $\tk^{(\ell)}_{d_0^{c}/2}$.
Otherwise, by Lemma \ref{lem:sparse}, $G$ contains a $\tk^{(\ell)}_{t_2 d_0}$ for constant $t_2 > 0$ and some $\ell \in \mathbb{N}$.
\end{proof}

\section*{Note added after submission}

Recently, the constant $c$ in Theorem \ref{thm:main} is improved to $1/2$ by Gil Fernández, Hyde, Liu, Pikhurko and Wu \cite{FHLPW22}, and Luan, Tang, Wang and Yang \cite{LTWY22} independently.
Note that both proofs use Lemma \ref{lem:sparse}.

\section*{Acknowledgements}

We thank the anonymous referee for their careful reading and suggestions.

%%%%%%%%%%%%%%%%%%%%%%%%%%%%%%%%%%%%%%%%%%%%%%%%%%%%%%%%%%%%%%%%%%%

\bibliographystyle{abbrv}
\bibliography{balanced_subdivision}

\end{document}